\documentclass[preprint,12pt]{elsarticle}

\usepackage{amsmath}
\usepackage{amsfonts}
\usepackage{amsthm}
\usepackage{hyperref}
\usepackage{subfig}
\usepackage{algorithmic}
\usepackage{algorithm}
\usepackage{booktabs}
\usepackage{tikz}
\usetikzlibrary{shapes.geometric, arrows.meta, positioning, calc, backgrounds}

\newtheorem{theorem}{Theorem}
\theoremstyle{definition}
\newtheorem{example}{Example}
\newtheorem{remark}{Remark}

\begin{document}

\begin{frontmatter}

\title{\texorpdfstring{Energy Dissipation Preserving Feature-based DNN Galerkin Methods for Gradient Flows}{Energy Dissipation Preserving Feature-based DNN Galerkin Methods for Gradient Flows}}

\author[inst4,inst7]{Tao Tang}

\author[inst1,inst3,inst5,inst6]{Jiang Yang\corref{cor1}}
%\ead{yangj7@sustech.edu.cn}
%\cortext[cor1]{Corresponding author. Emails: ttang@nfu.edu.cn (T. Tang), 12131241@mail.sustech.edu.cn (Y. Zhao), zhuqh@nus.edu.sg (Q. Zhu)}
\cortext[cor1]{Corresponding author. \\ 
{\it Email addresses:} ttang@nfu.edu.cn (T. Tang), yangj7@sustech.edu.cn (J. Yang),
 12131241@mail.sustech.edu.cn (Y. Zhao), zhuqh@nus.edu.sg (Q. Zhu)}

\author[inst1]{Yuxiang Zhao}

\author[inst2]{Quanhui Zhu}

\address[inst1]{Department of Mathematics, Southern University of Science and Technology, Shenzhen, China.}

\address[inst2]{Department of Mathematics, National University of Singapore, Singapore.}

\address[inst3]{SUSTech International Center for Mathematics, Shenzhen, China.}

\address[inst4]{School of Mathematics and Statistics, Guangzhou Nanfang College, Guangzhou, China.}

\address[inst5]{Guangdong Provincial Key Laboratory of Computational Science and Material Design, SUSTech, Shenzhen, China.}

\address[inst6]{National Center for Applied Mathematics Shenzhen (NCAMS), Shenzhen, China.}

\address[inst7]{Zhuhai SimArk Technology Co., LTD, Zhuhai, China.}

\begin{abstract}
In recent years, deep learning methods, exemplified by Physics-Informed Neural Networks (PINNs), have been widely applied to the numerical solution of differential equations. However, these methods may suffer from limited accuracy, high training costs, and lack of robustness, particularly their inability to preserve the intrinsic physical structures of continuous PDE models, such as the energy dissipation property in gradient flow systems. To address these challenges, we propose a feature-based Deep Neural Network Galerkin (DNN-G) framework designed for structure-preserving simulations of gradient flows. Instead of treating neural networks merely as optimization-driven solvers, we employ them as adaptive feature generators that define nonlinear trial spaces within a Galerkin projection formulation.This formulation guarantees semi-discrete energy dissipation and can be naturally combined with energy stable time integration schemes. Several strategies for constructing neural basis functions are investigated, including random features, structured initialization, and problem-informed pre-training. Numerical experiments demonstrate that the proposed method preserves robust energy stability in high-dimensional settings and accurately captures complex topological transitions. With equivalent degrees of freedom, the DNN-G framework achieves higher accuracy than classical spectral methods, highlighting the effectiveness of neural feature representations for the numerical solution of partial differential equations.
\end{abstract}

\begin{keyword}
Deep learning \sep Gradient flows \sep Energy dissipation \sep Galerkin methods
\end{keyword}

\end{frontmatter}

%% 
%SECTION 1
\section{Introduction}\label{sec::introduction}
The numerical simulation of gradient flows plays a fundamental role in material science,
 fluid mechanics, and biological modeling. 
A broad class of dissipative systems can be formulated in the form
\begin{equation}
    \frac{\partial u}{\partial t} 
    = \mathcal{G} \frac{\delta E}{\delta u},
    \quad x \in \Omega, \quad t \in [0,T],
    \label{eq::gradient_flow}
\end{equation}
where $E[u]$ is an energy functional and 
$\mathcal{G}$ is a negative semi-definite operator characterizing the dissipation mechanism. 
This formulation immediately implies the energy dissipation law
\begin{equation}
    \frac{\mathrm{d}E}{\mathrm{d}t} = \int_{\Omega} \frac{\delta E}{\delta u} \cdot \mathcal{G} 
    \frac{\delta E}{\delta u} \mathrm{d}x \leq 0,
\end{equation} 
which governs the long-time dynamics of the system. 
Preserving this structure at the discrete level is therefore 
essential for physical fidelity and numerical stability.

Over the past decades, extensive efforts in numerical analysis have 
led to a variety of energy-stable schemes, including convex splitting methods 
\cite{shen_secondorderconvexsplitting_2012,chen_linearenergystable_2012},
scalar auxiliary variable approaches \cite{shen_scalarauxiliaryvariable_2018,shen_newclassefficient_2019}, invariant energy quadratization methods \cite{yang_numericalapproximationsthreecomponent_2017}, 
stabilized implicit-explicit schemes \cite{beylkin_newclasstime_1998,fu_energydiminishingimplicitexplicit_2024}, 
and exponential time differencing methods 
\cite{cao_exponentialtimedifferencing_2024,tian_exponentialtimedifferencing_2020,fu_energydecreasingexponentialtime_2022,wang_efficientstableexponential_2016,fu_higherorderenergydecreasingexponential_2024}. 
These methods provide rigorous energy stability and high-order temporal accuracy.
Nevertheless, most of them rely on mesh-based discretizations and 
their efficiency may deteriorate in high-dimensional problems or in situations involving 
complex geometries and topological changes.

Recently, deep neural networks (DNNs) have emerged as a flexible 
mesh-free framework for solving partial differential equations. 
Representative approaches include Physics-Informed Neural Networks (PINNs) 
\cite{raissi_physicsinformedneuralnetworks_2019} and related neural PDE solvers 
\cite{sirignano_dgmdeeplearning_2018,zang_weakadversarialnetworks_2020,e_deepritzmethod_2018,lu_learningnonlinearoperators_2021},
that approximate the solution or the solution operator using a neural network. 
Their expressive power and mesh-free nature make them attractive 
for high-dimensional problems and complex geometries 
\cite{han_solvinghighdimensionalpartial_2018,xiong_deepfiniteelement_2025,grekas_deepritzfinite_2025,bruna_neuralgalerkinscheme_2022,liu_multiscaledeepneural_2020}.
Extensions to gradient flow problems have also been explored 
\cite{yang_localdeeplearning_2022,wight_solvingallencahncahnhilliard_2021,wang_meshfreemethodinterface_2020}.

Despite their flexibility, ensuring physical consistency in neural PDE solvers 
remains a significant challenge. 
Most existing approaches enforce the governing equations through 
penalty terms in a loss function, leading to a soft-constraint formulation 
\cite{kutuk_energydissipationpreserving_2025}. 
For dissipative systems such as gradient flows, this strategy generally does not 
guarantee the discrete energy dissipation law. 
As a consequence, numerical solutions may exhibit artificial energy growth 
or oscillations, particularly in stiff regimes or long-time simulations. 
Developing neural solvers that rigorously preserve the intrinsic 
variational structure therefore remains an open problem.

Recent efforts to bridge this gap have explored diverse strategies. 
Hu et al.~\cite{hu_energeticvariationalneural_2024} proposed an energetic variational 
neural network in which the energy functional is directly embedded 
into a time-stepping optimization procedure.  
Zhang et al.~\cite{zhang_energydissipativeevolutionarydeep_2024}
combined operator learning with the SAV methodology 
to ensure the dissipation of a modified energy functional. 
While promising, these approaches typically rely on optimization 
at each time step or specialized network architectures. 
A general framework that combines neural representations with 
structure-preserving numerical discretizations remains largely unexplored.

To address these challenges, we propose a structure-preserving 
deep neural network Galerkin (DNN-G) framework for gradient flows. 
Instead of treating neural networks as optimization-based solvers, 
we interpret the outputs of the last hidden layer as adaptive 
basis functions that define a nonlinear trial space.
By projecting the gradient flow equation onto this neural trial space, 
we obtain a semi-discrete dynamical system for the coefficients. 
We show that this projected system inherits the energy dissipation law 
of the original continuous problem, thereby providing a rigorous 
structure-preserving guarantee at the semi-discrete level.

The main contributions of this work are summarized as follows:
\begin{itemize}
    \item \textbf{Structure-preserving DNN-Galerkin formulation.} 
    We establish a Galerkin projection framework based on neural basis functions 
    that guarantees semi-discrete energy dissipation for gradient flows.
The resulting semi-discrete system can be combined naturally with 
high-order time integration schemes.
\item \textbf{Adaptive neural basis construction.} 
We propose strategies for constructing and updating neural trial spaces 
during the evolution process. 
The adaptive mechanism allows the basis functions to track sharp interfaces, 
multiscale coarsening dynamics, and topological changes.
\item \textbf{Scalability and high-order accuracy.}
Numerical experiments demonstrate that the proposed method achieves 
higher accuracy under the same degrees of freedom compared with classical 
spectral discretizations, while maintaining high-order temporal accuracy 
and good scalability in higher-dimensional problems.
\end{itemize}
The remainder of this paper is organized as follows. 
Section 2 presents the theoretical formulation of the DNN-Galerkin method 
and establishes its energy-dissipative property. 
Section 3 discusses the construction and adaptation of neural basis functions. 
Section 4 provides numerical experiments demonstrating accuracy, stability, 
and scalability. 
Conclusions are drawn in Section 5.

%SECTION 2
\section{Structure-Preserving DNN Galerkin Scheme}\label{sec::DNN-G}

In this section, we develop a structure-preserving numerical framework 
for gradient flows based on neural Galerkin discretization. 
Instead of treating neural networks as direct solvers for the PDE, 
we interpret the outputs of the last hidden layer as adaptive basis functions 
that define a nonlinear trial space. 
Projecting the gradient flow equation onto this neural trial space 
yields a semi-discrete dynamical system for the coefficients. 
Unlike residual-minimization approaches such as Physics-Informed Neural Networks 
or Deep Galerkin Methods, the resulting formulation naturally preserves 
the energy dissipation structure of the underlying variational problem.

\subsection{Neural basis functions}
We employ a standard feedforward neural network to generate 
parameterized basis functions:
\begin{equation}
    \begin{aligned}
        z_0 &= I(x),\\
        z_l &= \sigma(W_l z_{l-1} + b_l), \quad l=1,\dots,L,
    \end{aligned}
\end{equation}
where $I(x)$ denotes the input feature mapping, 
$W_l$ and $b_l$ are weight matrices and bias vectors, 
and $\sigma$ is the activation function. 
Let
\begin{equation}
    \Phi(x;\theta)
=
\{\phi_1(x;\theta),\dots,\phi_m(x;\theta)\}
=
z_L
\end{equation}
denote the vector of neural features extracted from the last hidden layer \cite{yang_$e$rankstaircasephenomenon_2025}
where $\theta = \{W_l,b_l\}_{l=1}^L$ collects all trainable parameters.

When the parameter $\theta$ is fixed, $\Phi(x;\theta)$ defines a finite-dimensional trial 
space 
\begin{equation}
    \mathcal{V}_\theta = \text{span} \{\phi_1(x; \theta), \dots, \phi_m(x; \theta)\}.
\end{equation}
The trial space is generally nonlinear in the parameter $\theta$ 
but linear in the coefficients $\beta(t)$, 
which allows the projected system to retain a linear mass matrix structure.

To incorporate the initial condition exactly, 
we augment the basis by including
\begin{equation}
    \phi_0(x)=u_0(x), 
\qquad 
\phi_{m+1}(x)=1,
\end{equation}
and define the augmented basis \begin{equation}
    \Phi_a(x;\theta)
=
[u_0(x), \phi_1(x;\theta), \dots, \phi_m(x;\theta), 1]^T.
\end{equation}

The semi-discrete solution is represented as
\begin{equation}
u_h(x,t)
=
\sum_{i=0}^{m+1} \beta_i(t)\phi_i(x;\theta)
=
\beta(t)^T \Phi_a(x;\theta),
\label{eq::NN}
\end{equation}
where $\beta(t)\in\mathbb{R}^{m+2}$ are time-dependent coefficients.
Choosing $\beta(0)=[1,0,\dots,0]^T$
ensures $u_h(x,0)=u_0(x)$ exactly.

For periodic domains $\Omega$ with period $L$, 
periodicity is enforced through an input feature embedding
\begin{equation}
I(x)
=
\left(
\cos\left(\frac{2\pi x}{L}\right),
\sin\left(\frac{2\pi x}{L}\right)
\right),
\end{equation}
so that all neural basis functions are periodic by construction.

\subsection{Semi-discrete DNN-Galerkin System}

To facilitate the analysis of the energy dissipation property,
we introduce the auxiliary variable $v$ and the gradient flow \eqref{eq::gradient_flow} can then be written
in the mixed form
\begin{equation}
\begin{cases}
\dfrac{\partial u}{\partial t} = \mathcal G v, \\
v = \dfrac{\delta E}{\delta u}.
\end{cases}
\label{eq::mixed_form}
\end{equation}

We approximate both $u$ and $v$ in the neural trial space
$\mathcal V_\theta$. The semi-discrete solution for $u$ and $v$ is given by
\begin{equation}
u_h(x,t)=\sum_{i=0}^{m+1}\beta_i(t)\phi_i(x;\theta),\quad v_h(x,t)=\sum_{i=0}^{m+1}\gamma_i(t)\phi_i(x;\theta).
\end{equation}

The mixed Galerkin formulation reads:
Find $u_h,v_h\in\mathcal V_\theta$ such that
\begin{equation}
    \begin{aligned}
\int_\Omega \partial_t u_h \psi \mathrm{d}x &=\int_\Omega \mathcal G v_h \psi\mathrm{d}x,
\quad \forall \psi\in\mathcal V_\theta,
\\
\int_\Omega v_h \psi\mathrm{d}x &=\int_\Omega \frac{\delta E}{\delta u}(u_h)\psi\mathrm{d}x,
\quad \forall \psi\in\mathcal V_\theta.
    \end{aligned}
    \label{eq::mix_weakform}
\end{equation}

Substituting the neural expansions leads to the
finite-dimensional system
\begin{equation}
    \begin{aligned}
        \mathbf M(\theta)\dot{\beta}&=\mathbf G(\theta)\gamma,
\\
\mathbf M(\theta)\gamma &=\mathbf r(\beta),
    \end{aligned}
    \label{eq::ode_system}
\end{equation}
where
\begin{equation}
M_{ij}
=
\int_\Omega \phi_i\phi_j \mathrm{d}x,\quad G_{ij} =
\int_\Omega \phi_i \mathcal G \phi_j \mathrm{d}x,\quad r_i(\beta) = \int_\Omega \frac{\delta E}{\delta u}(u_h)\phi_i \mathrm{d}x.
\end{equation}

Since $\mathbf M(\theta)$ is symmetric positive definite
whenever the neural basis functions are linearly independent,
the above system defines a finite-dimensional dynamical system.

\subsection{Energy Dissipation Preservation}

We show that the mixed neural Galerkin system
inherits the intrinsic energy dissipation property
of the continuous gradient flow.

\medskip
\begin{theorem}
    Assume that the operator $\mathcal G$ is symmetric
negative semi-definite and that the neural basis functions
are linearly independent so that the mass matrix
$\mathbf M(\theta)$ is symmetric positive definite.
Then the semi-discrete solution satisfies
\begin{equation}
\frac{\mathrm{d}}{\mathrm{d}t}E[u_h(t)]\le0 .
\end{equation}
\label{thm:energy_stable}
\end{theorem}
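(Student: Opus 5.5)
The plan is to compute $\frac{\mathrm{d}}{\mathrm{d}t}E[u_h(t)]$ by the chain rule and then use the two equations of the mixed system \eqref{eq::ode_system} to turn it into a quadratic form in $\mathbf G(\theta)$. Throughout, $\theta$ is held fixed, so the basis $\Phi_a(x;\theta)$ does not depend on time and $\partial_t u_h = \sum_i \dot\beta_i(t)\phi_i$.

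\textbf{Step 1: chain rule.} By the definition of the variational derivative,
\begin{equation*}
\frac{\mathrm{d}}{\mathrm{d}t}E[u_h]=\int_\Omega \frac{\delta E}{\delta u}(u_h)\,\partial_t u_h\,\mathrm{d}x
=\sum_i \dot\beta_i \int_\Omega \frac{\delta E}{\delta u}(u_h)\phi_i\,\mathrm{d}x=\dot\beta^T\mathbf r(\beta).
\end{equation*}

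\textbf{Step 2: use the second equation.} It gives $\mathbf r(\beta)=\mathbf M\gamma$, so $\frac{\mathrm{d}}{\mathrm{d}t}E=\dot\beta^T\mathbf M\gamma$. Equivalently, the test function $\psi=\partial_t u_h\in\mathcal V_\theta$ is admissible in the second relation of \eqref{eq::mix_weakform}.

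\textbf{Step 3: use the first equation.} Since $\mathbf M$ is symmetric, $\dot\beta^T\mathbf M\gamma=\gamma^T\mathbf M\dot\beta$. By the first equation this equals $\gamma^T\mathbf G\gamma$. In function form,
\begin{equation*}
\gamma^T\mathbf G\gamma=\int_\Omega v_h\,\mathcal G v_h\,\mathrm{d}x,
\end{equation*}
which is the first weak equation tested with $\psi=v_h\in\mathcal V_\theta$. Because $\mathcal G$ is negative semi-definite, this quantity is $\le 0$, which is the claim.

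Positive definiteness of $\mathbf M$ is needed only so that $\gamma=\mathbf M^{-1}\mathbf r(\beta)$ and $\dot\beta$ are uniquely defined. This makes the ODE well posed, so the trajectory $\beta(t)$ exists and is differentiable. Local Lipschitz continuity of $\mathbf r$, inherited from the smoothness of $\delta E/\delta u$, gives local existence.

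\textbf{Main obstacle.} I expect the delicate point to be Step 1, namely justifying the chain rule for $E$ along $u_h$. This requires $E$ to be Fréchet differentiable on the span, with $u_h$ regular enough that $\frac{\delta E}{\delta u}(u_h)$ lies in $L^2$ and the pairing is meaningful. For typical energies (Allen--Cahn or Cahn--Hilliard type), the neural features are smooth and periodic, so boundary terms from integrating the gradient part by parts vanish. I would therefore state this regularity as a standing assumption.

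A secondary subtlety is that $\mathcal G$ may be an unbounded operator such as $\Delta$. In that case $G_{ij}$ must be interpreted through integration by parts, e.g. $-\int\nabla\phi_i\cdot\nabla\phi_j$, and negative semi-definiteness of $\mathcal G$ must be understood on $\mathcal V_\theta$. This is again automatic for smooth periodic features.
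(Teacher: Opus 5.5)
Your proof is correct and follows essentially the same route as the paper: the chain rule, then testing the second weak equation with $\psi=\partial_t u_h$ and the first with $\psi=v_h$ to obtain $\int_\Omega v_h\,\mathcal G v_h\,\mathrm{d}x\le 0$. Your matrix identity $\dot\beta^T\mathbf r=\dot\beta^T\mathbf M\gamma=\gamma^T\mathbf G\gamma$ is the coordinate form of the same computation, and your remarks on justifying the chain rule and on interpreting an unbounded $\mathcal G$ weakly are sound regularity caveats that the paper leaves implicit.
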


\begin{proof}
 Define the discrete energy
\begin{equation}
\mathcal E(t)=E[u_h(\cdot,t)] .
\end{equation}
By the chain rule and the definition of the variational derivative, we have
\begin{equation}
\frac{\mathrm{d}}{\mathrm{d}t}\mathcal E(t)
=\int_\Omega
\frac{\delta E}{\delta u}(u_h)
\partial_t u_h\mathrm{d}x .
\end{equation}
From the second equation of the mixed formulation,
we have
\begin{equation}
\int_\Omega v_h \psi\mathrm{d}x
=
\int_\Omega
\frac{\delta E}{\delta u}(u_h)\psi\mathrm{d}x,
\quad
\forall \psi\in\mathcal V_\theta .
\end{equation}
Since the neural basis functions are time-independent,
we have $\partial_t u_h \in \mathcal V_\theta$.
Choosing $\psi=\partial_t u_h$ yields
\begin{equation}
\frac{\mathrm{d}}{\mathrm{d}t}\mathcal E(t)
=
\int_\Omega v_h \partial_t u_h\mathrm{d}x .
\end{equation}
Using the first equation of Eq. \eqref{eq::mix_weakform} and
choosing $\psi=v_h$, we obtain
\begin{equation}
\int_\Omega v_h \partial_t u_h\mathrm{d}x
=
\int_\Omega v_h (\mathcal G v_h)\mathrm{d}x .
\end{equation}
Since $\mathcal G$ is symmetric negative semidefinite,
\begin{equation}
\int_\Omega v_h (\mathcal G v_h)\mathrm{d}x \le 0,
\end{equation}
which immediately implies
\begin{equation}
\frac{\mathrm{d}}{\mathrm{d}t}E[u_h(t)] = \int_\Omega v_h (\mathcal G v_h)\mathrm{d}x \le0 .
\end{equation}
\end{proof}

Theorem \ref{thm:energy_stable} establishes that the proposed DNN-Galerkin
formulation preserves the energy dissipation property
at the semi-discrete level. In practical computations,
the resulting ODE system must be further discretized
in time. To illustrate how the dissipation structure
can be preserved at the fully discrete level,
we present a representative example for the
$L^2$ gradient flow.

\begin{example}[Fully discrete energy-stable scheme for $L^2$ gradient flow]
    Consider the $L^2$ gradient flow, where the metric operator
reduces to the negative identity, $\mathcal{G} = -I$.
In this case the operator matrix becomes
$\mathbf{G}(\theta) = -\mathbf{M}(\theta)$,
and the semi-discrete system \eqref{eq::ode_system}
reduces to
\begin{equation}
    \mathbf{M}(\theta)\dot{\beta} = - \nabla_\beta \mathcal{E}(\beta).
\end{equation}

For a typical Ginzburg--Landau free energy functional
\begin{equation}
    E[u] = \int_\Omega
\left(
\frac{\epsilon^2}{2} |\nabla u|^2 + F(u)
\right)\mathrm{d}x,
\end{equation}
the gradient of the discrete energy can be evaluated via
integration by parts, yielding a decomposition:
\begin{equation}
    \mathbf{M}(\theta)\dot{\beta}
= - \epsilon^2 \mathbf{K}\beta - \mathbf{g}(\beta),
\end{equation}
where 
\begin{equation}
K_{ij} = \int_\Omega \nabla \phi_i \cdot \nabla \phi_j \mathrm{d}x,\quad g_i(\beta) = \int_\Omega F'(u_h) \phi_i \mathrm{d}x.
\end{equation}

To integrate this semi-discrete system while preserving
the dissipative structure, we adopt the energy-stabilized
two-stage IMEX scheme proposed in \cite{fu_energydiminishingimplicitexplicit_2024}.
Artificial stabilization parameters $\alpha \ge 0$
and $S \ge 0$ are introduced to guarantee unconditional
energy dissipation. The diffusion term is
treated implicitly, while the nonlinear term is
handled explicitly.

The fully discrete IMEX-RK2 scheme is given as follows:
\begin{equation}
\begin{aligned}
\textbf{Stage 1:}\qquad
\mathbf{M} \frac{\beta^* - \beta^n}{\gamma \Delta t} 
&= - \epsilon^2 \mathbf{K} \beta^* - \mathbf{g}(\beta^n) \\
&\quad - \alpha \epsilon^2 \mathbf{K} (\beta^* - \beta^n)
      - S \mathbf{M} (\beta^* - \beta^n), \\[1ex]
\textbf{Stage 2:}\quad
\mathbf{M} \frac{\beta^{n+1} - \beta^n}{\Delta t} 
&= - \epsilon^2 \mathbf{K} \big[(1-\gamma)\beta^* + \gamma \beta^{n+1}\big] \\
&\quad - \big[\delta \mathbf{g}(\beta^n) + (1-\delta)\mathbf{g}(\beta^*)\big] \\
&\quad - \alpha \epsilon^2 \mathbf{K}
      \big[\gamma(\beta^{n+1}-\beta^*) + \delta(\beta^*-\beta^n)\big] \\
&\quad - S \mathbf{M}
      \big[\gamma(\beta^{n+1}-\beta^*) + \delta(\beta^*-\beta^n)\big].
\end{aligned}
\label{eq::imex-rk2}
\end{equation}
Here $\gamma = 1 - 1/\sqrt{2}$ and
$\delta = 1 - 1/(2\gamma)$.
The resulting fully discrete DNN-Galerkin scheme
is second-order accurate in time and unconditionally
energy stable. The detailed proof of the discrete
energy law can be found in
\cite{fu_energydiminishingimplicitexplicit_2024}
and is therefore omitted here.
\end{example}

This example demonstrates that the semi-discrete
DNN-Galerkin formulation can be naturally combined
with existing structure-preserving time integrators
to construct fully discrete schemes that retain the
energy dissipation property.

\begin{remark}
    The semi-discrete system \eqref{eq::ode_system} can be written as
\begin{equation}
\frac{\mathrm{d}\beta}{\mathrm{d}t}
=
\mathbf{M}^{-1}\mathbf{G}\mathbf{M}^{-1} \mathbf{r}(\beta).
\end{equation}
Although the energy dissipation property is independent 
of the specific neural network architecture 
and holds for any linearly independent neural basis, 
 the conditioning of $\mathbf{M}$ 
strongly affects numerical stability in practice. 
Nearly linearly dependent basis functions may lead to 
an ill-conditioned system, motivating the adaptive 
basis construction introduced in the next section.
\end{remark}

%SECTION 3
\section{Choice of Neural Basis Functions}\label{sec::basis_choice}
While Theorem 2.1 guarantees unconditional semi-discrete energy stability,
the overall accuracy and robustness of the method are primarily determined
by the quality of the neural trial space.
Unlike traditional discretizations where basis functions are fixed
(e.g., Fourier modes or finite elements),
the DNN Galerkin framework allows the trial space
\(\mathcal{V}_\theta\) to be generated and adapted through neural features.
Consequently, the construction of neural basis functions
becomes a central component of the proposed methodology.
This section presents a strategy for constructing
and refining the neural basis, including two approaches illustrated in Figure \ref{fig:basis_evolution_parallel}:

\begin{itemize}
    \item \textbf{Randomized Basis Construction.}
    A static neural basis is first generated under the Random Features paradigm,
    providing universal approximation without iterative training.
    On this foundation, a structured first-layer initialization (SFLI)
    is introduced to enhance linear independence and improve
    the spectral properties of the mass matrix.

    \item \textbf{Problem-Informed Basis Refinement.}
    Starting from the structured static basis $\Phi(x,t;\theta_0)$,
    equation-specific information is incorporated either
    through initial-condition alignment or through residual-driven
    pre-training.
    This produces either a frozen problem-adapted basis
    or a time-local adaptive basis.
\end{itemize}

\usetikzlibrary{shapes.geometric, arrows.meta, positioning, calc, backgrounds, fit}

\definecolor{basisblue}{RGB}{0, 102, 204}
\definecolor{optorange}{RGB}{230, 126, 34}
\definecolor{lossgray}{RGB}{245, 245, 245}

\begin{figure}[htbp]
    \centering
    \begin{tikzpicture}[
        node distance=1.2cm and 1.8cm,
        box/.style={rectangle, draw=basisblue, fill=white, thick, minimum width=2.4cm, minimum height=1.0cm, align=center, font=\small\sffamily},
        loss_node/.style={ellipse, draw=optorange, fill=lossgray, thick, minimum width=2.2cm, align=center, font=\scriptsize\itshape\sffamily},
        final_node/.style={rectangle, rounded corners=3pt, draw=black!70, fill=gray!5, thick, minimum width=3.2cm, minimum height=1.0cm, align=center, font=\small\sffamily},
        arrow/.style={-{Stealth[scale=1.1]}, thick, draw=black!80},
        label_above/.style={font=\footnotesize\itshape\sffamily, midway, yshift=0.25cm},
        label_static/.style={font=\footnotesize\bfseries\sffamily, color=basisblue!80, yshift=0.15cm}
    ]

    \node [font=\large\bfseries\sffamily] (x) {$\begin{pmatrix} x \\ t \end{pmatrix}$}; 
    
    \node [box, right=of x] (hidden) {Hidden Layers \\ ($\theta$)};
    \draw [arrow] (x) -- node[label_above] {(SFLI)} (hidden);

    \node [box, right=of hidden, line width=1.2pt] (phi0) {$\Phi(x, t;\theta_0)$};
    \draw [arrow] (hidden) -- node[label_above] {Initialization} (phi0);
    
    \node [label_static, above=0.05cm of phi0] {Static Neural Basis};

    %
    %   \node [box, below=2.8cm of x] (phi0_in) {$\Phi(x, t;\theta_0)$};
    \node [box, below=2.8cm of x, xshift=-0.8cm] (phi0_in) {$\Phi(x, t;\theta_0)$};

    \node [loss_node] (loss_init) at (phi0_in -| hidden) [yshift=0.8cm] {Supervised Learning \\ \tiny $\| \beta^T \Phi - u_0 \|^2$};
   % \node [loss_node] (loss_res) at (phi0_in -| hidden) [yshift=-0.8cm] {$\mathcal{L}_{\text{res}}$ \\ \tiny $\| \partial_t u - \mathcal{L} u \|^2$};
\node [loss_node] (loss_res) at (phi0_in -| hidden) [yshift=-0.8cm] {PINN};

    \node [final_node] (basis_frozen) at (loss_init -| phi0) {$\Phi(x; \theta^*)$ \\ \footnotesize (Frozen Basis)};
    \node [final_node] (basis_adaptive) at (loss_res -| phi0) {$\Phi(x, t_k; \theta^*)$ \\ \footnotesize (Adaptive Basis)};

    \draw [arrow] (phi0_in.east) -- ++(0.5,0) |- (loss_init.west);
    \draw [arrow] (phi0_in.east) -- ++(0.5,0) |- (loss_res.west);
    \draw [arrow] (loss_init) -- (basis_frozen);
    \draw [arrow] (loss_res) -- (basis_adaptive);

    \begin{scope}[on background layer]
        \node[draw=optorange!60, fill=optorange!5, dashed, thick, rounded corners=10pt, inner sep=15pt, 
              fit=(phi0_in) (loss_init) (loss_res) (basis_frozen) (basis_adaptive)] (training_box) {};
    \end{scope}
    
    \node[anchor=south, font=\small\bfseries\sffamily, color=optorange, yshift=0.2cm] at (training_box.north) {Pre-Training / Optimization};

    \end{tikzpicture}
    \caption{Construction of the neural basis functions. 
%    Top: Generation of the static random basis $\Phi(x, t;\theta_0)$ with and without SFLI. 
%    Bottom: Optimization stage where $\Phi(x, t;\theta_0)$ is refined via either initial condition alignment ($\mathcal{L}_{\text{init}}$) or PDE residual guidance ($\mathcal{L}_{\text{res}}$).
    }
    \label{fig:basis_evolution_parallel}
\end{figure}
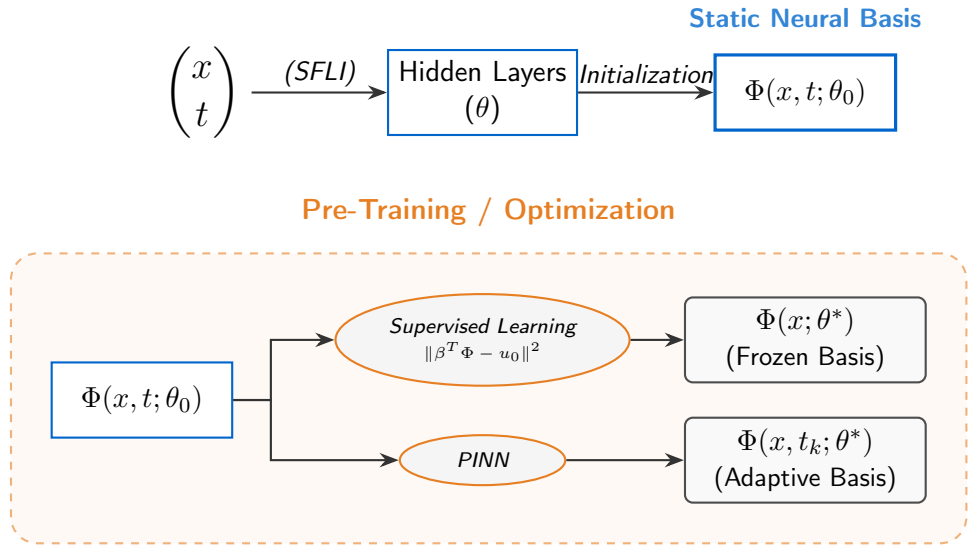

\subsection{Static Neural Basis Functions}
In certain regimes, the neural basis can be constructed
without iterative training.
By carefully initializing the parameters $\theta$,
one may generate a trial space with desirable approximation
and conditioning properties,
thereby avoiding the costly training process.
Throughout this subsection, the parameters $\theta$ are fixed in time,
and only the coefficient vector $\beta(t)$ evolves.

\subsubsection{Random Features}
The Random Feature paradigm provides a direct mechanism
for constructing a neural basis without backpropagation.
This idea originates from the extreme learning machine \cite{huang_universalapproximationusing_2006} and 
random Fourier features \cite{rahimi_randomfeatureslargescale_2007a}, 
which establish that randomly sampled hidden nodes
yield universal approximation with high probability.
This theoretical foundation has been extensively verified across various computational domains 
\cite{chen_randomfeaturemethod_2023, nelsen_randomfeaturemodel_2021, dong_localextremelearning_2021, chen_optimizationrandomfeature_2024}. 

In the Galerkin framework,
the internal parameters $\theta=\{W_l,b_l\}_{l=1}^L$
are initialized from a prescribed distribution
(e.g., Glorot or Xavier initialization)
and subsequently frozen.
The resulting trial space is
\begin{equation}
\mathcal{V}_{\theta_\mathrm{rand}}
=
\mathrm{span}
\{\phi_1(x;\theta_{\mathrm{rand}}),
\dots,
\phi_m(x;\theta_{\mathrm{rand}})\}.
\end{equation}

Under this construction,
the nonlinear PDE reduces to a system of ODEs \eqref{eq::ode_system} that are linear
with respect to the coefficient vector $\beta(t)$.

Although Random Features offer simplicity
and avoid training plateaus,
their approximation efficiency deteriorates
as higher accuracy is required.
In practice, the number of neurons must grow rapidly
to enrich the functional coverage,
which often leads to highly correlated basis functions.
Consequently, the mass matrix $\mathbf{M}$
may develop small singular values,
resulting in poor conditioning.
This limitation motivates the structured initialization
discussed next.
\subsubsection{Structured First-Layer Initialization (SFLI)}
To move beyond purely stochastic initialization,
we adopt a deterministic structured construction
for the first network layer \cite{tang_structuredfirstlayerinitialization_2025}. For a one-dimensional domain $\Omega$ with identity input mapping $z_0=x$,
the first-layer neurons are initialized as
\begin{equation}
(z_1)_j = \sigma(s( x-x_j)),
\quad j=1,\dots,m,
\end{equation}
where $\{x_j\}_{j=1}^m$ are uniformly distributed points
and $s$ controls the slope of the activation. 
 
This structured placement ensures that
the initial basis functions are spatially localized
and well separated.
As a result, the $\epsilon$-rank \cite{yang_$e$rankstaircasephenomenon_2025} of the mass matrix
is significantly improved compared to purely random sampling.
The enhanced spectral structure directly benefits
the conditioning of the semi-discrete system.
 
From a numerical analysis perspective,
SFLI can be interpreted as a mesh-free analogue
of localized finite element basis functions.
The structured placement of neurons
provides spatial localization similar to nodal basis functions,
while deeper network layers allow nonlinear deformation
of these initial features.

\subsection{Offline Pre-training Basis}
While the SFLI provides a robust general neural basis, 
the trial space $\mathcal{V}_\theta$ can be further enriched
 by incorporating problem-specific information. 
We discuss two refinement strategies:
initial-condition alignment and residual-driven pre-training.

\subsubsection{Initial-Conditions-Motivated Basis}
A natural strategy to customize the functional basis is to align the neural 
network with the initial state $u_0(x)$ through an offline pre-training stage. 
This is formulated as the following $L^2$ minimization problem:
\begin{equation}    
    \min_{\theta, \beta} \mathcal{L}_{\text{init}}(\theta, \beta) = \frac{1}{N} \sum_{i=1}^N \left| \beta^T \Phi(x_i; \theta) - u_0(x_i) \right|^2.
\end{equation}
Here, $\beta$ serves as an auxiliary parameter
during optimization and is discarded afterward.
Upon convergence,
the internal parameters are frozen as $\theta^*$,
and the trial space becomes
\begin{equation}
\mathcal{V}_{\theta^*}
=
\text{span}\{\phi_1(x;\theta^*),\dots,\phi_m(x;\theta^*)\}.
\end{equation}
This procedure produces a problem-adapted frozen basis
that accurately captures the initial geometry.

However, two limitations arise.
First, if $u_0$ lies on a low-dimensional manifold,
the optimization may reduce basis diversity,
leading to a low effective rank of $\mathbf{M}$.
Second, for problems exhibiting significant
topological transitions,
a basis optimized solely at $t=0$
may lack sufficient expressivity at later times.

\subsubsection{Adaptive Basis Strategy}
For problems involving significant topological changes or complex dynamics,
a frozen basis may become insufficient for long-term simulations.
We therefore introduce an adaptive strategy
that combines global feature extraction
with local Galerkin evolution.
The adaptive DNN-Galerkin procedure consists of three
main components: 
\begin{enumerate}
    \item \textbf{Global Feature Extraction via Pre-training.} 
    Over the entire time interval $[0, T]$, 
    a PINN is first trained to capture the coarse global dynamics
by minimizing the residual functional
\begin{equation}
\mathcal{L}_{\mathrm{res}}
=
\int_0^T \int_\Omega
\left|
\partial_t u_p - \mathcal{G}\frac{\delta E}{\delta u}(u_p) 
\right|^2
\mathrm{d}x\mathrm{d}t,
\end{equation}
where $u_p(x,t)=\beta^T\Phi(x,t;\theta)$.
The optimized hidden-layer parameters $\theta^*$
are retained as a global feature extractor.
High accuracy is not required at this stage,
since the network serves only as a feature extractor
for constructing the neural basis.

    \item \textbf{Time-Local Basis Construction.} 
    Define a set of restart times $\mathcal{T}_{\text{list}} = \{T_0, T_1, \dots, T_K\}$, where 
    $0 = T_0 < T_1 < \dots < T_K = T$. 
    At each restart time $T_k$, a local neural basis is constructed as
\begin{equation}
\Phi_k(x)
=
\big[
u_h(x,T_k^-),
\phi_1(x,T_k;\theta^*),
\dots,
\phi_m(x,T_k;\theta^*),
1
\big]^T.
\end{equation}
The solution on the subinterval $[T_k,T_{k+1}]$
is represented as
\begin{equation}
    u_{h}(x,t)=\beta(t)^T\Phi_k(x),\quad t\in[T_k,T_{k+1}).
\end{equation}
For $k=0$, the representation is initialized by setting
$u_h(x,T_0^-)=u_0(x)$ so that the initial condition is
satisfied.
    \item \textbf{Galerkin Evolution.} 
Within each subinterval $[T_k,T_{k+1}]$, the system is
advanced by solving the projected ODE system in the
fixed trial space $\mathcal{V}_k$,
\begin{equation}
    \mathbf{M}_k \frac{\mathrm{d}\beta}{\mathrm{d}t} = \mathbf{G}_k \mathbf{M}_k^{-1} \mathbf{r}_k(\beta),
\end{equation}
with the initial coefficient vector $\beta(T_k) = [1, 0, \dots, 0]^T$.
\end{enumerate}

The overall adaptive DNN-Galerkin procedure is summarized
in Algorithm~\ref{alg:hybrid_galerkin}. An important property of the above adaptive procedure is
that the solution is preserved exactly when the neural
basis is updated. This feature ensures that the discrete
free energy does not experience artificial jumps during
basis transitions. As a consequence, the global energy
dissipation property of the fully discrete scheme can be
established.

\begin{algorithm}[H]
\caption{Adaptive DNN Galerkin Scheme with PINN Pre-training}
\label{alg:hybrid_galerkin}
\begin{algorithmic}[1]
\REQUIRE Initial condition $u_0(x)$, time interval $[0, T]$, subinterval size $\Delta T$, and network architecture.
\ENSURE Numerical solution $u(x, t)$ for $t \in [0, T]$.

\STATE \textbf{Stage I: PINN Pre-training}
\STATE Train a standard PINN $u_p(x, t; \theta^*)$ on $[0, T]$ by minimizing the residual loss $\mathcal{L}_{\text{res}}$.
\STATE Extract the learned parameters $\theta^*$ for the hidden layers.

\STATE \textbf{Stage II: Temporal Galerkin Evolution}
\STATE Initialize $u_{0}(x, T_0) = u_0(x)$.
\FOR{$k = 0, 1, \dots, K - 1$}
    \STATE \textbf{Basis Construction}: Form the augmented basis at $T_k$:
    $\Phi_k(x) = [u_{h}(x, T_k), \phi_1(x, T_k; \theta^*), \dots, \phi_m(x, T_k; \theta^*), 1]^T$.
    \STATE \textbf{ODE Assembly}: Construct the mass matrix $\mathbf{M}_k$ and residual vector $\mathbf{r}_k$.
    \STATE \textbf{Time Stepping}: Solve the ODE system over $[T_k, T_{k+1}]$ using structure-preserving time integrator (e.g., IMEX-RK2):
    
    $\mathbf{M}_k \frac{\mathrm{d}\beta}{\mathrm{d}t} = \mathbf{G}_k \mathbf{M}_k^{-1} \mathbf{r}_k(\beta), \quad \beta(T_k) = [1, 0, \dots, 0]^T$.
    \STATE \textbf{Update}: Set $u_{h}(x, t) = \beta(t)^T \Phi_k(x)$ for $t \in [T_k, T_{k+1}]$.
\ENDFOR
\end{algorithmic}
\end{algorithm}

\begin{theorem}[Global discrete energy dissipation of the adaptive DNN-G scheme]

Let $\{t_n\}_{n=0}^{N_t}$ be the fully discrete time sequence and
$\{T_k\}_{k=0}^K\subseteq\{t_n\}_{n=0}^{N_t}$ the set of basis update times.
Assume that on each subinterval $[T_k,T_{k+1}]$ the
semi-discrete DNN-Galerkin system is advanced by a fully
discrete time integrator that satisfies the discrete
energy dissipation property (for example, the IMEX-RK2
scheme \eqref{eq::imex-rk2}).

Then the adaptive DNN-Galerkin solution
$u_h^{n} := u_h(\cdot,t_n)$ satisfies the global discrete
energy dissipation law
\begin{equation}
E[u_{h}^{n+1}] \le E[u_h^{n}], \qquad \forall n\ge0 .
\end{equation}

That is, the discrete free energy is monotonically
non-increasing over the entire simulation, including
across the basis update points.
\label{thm::fully_energy_stable}
\end{theorem}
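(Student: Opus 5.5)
The plan is to split the time grid into steps inside a single basis interval and steps that begin at a basis update time. Monotonicity inside an interval will come from the assumed energy-stable integrator. At an update, the task is to show that switching bases does not change the solution, so the energy does not jump. Chaining these two facts then gives the global law.

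\textbf{Step 1 (interior steps).} Fix $k$ and consider consecutive times $t_n,t_{n+1}\in[T_k,T_{k+1}]$. On this subinterval the trial space $\mathcal V_k=\mathrm{span}\,\Phi_k$ is frozen. The discrete coefficients evolve by the fully discrete integrator applied to $\mathbf M_k\dot\beta=\mathbf G_k\mathbf M_k^{-1}\mathbf r_k(\beta)$. The energy of the coefficient vector is $\mathcal E_k(\beta):=E[\beta^T\Phi_k]$. By hypothesis, for example the IMEX-RK2 scheme \eqref{eq::imex-rk2} with the energy law of \cite{fu_energydiminishingimplicitexplicit_2024}, we have $\mathcal E_k(\beta^{n+1})\le\mathcal E_k(\beta^n)$. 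Since $u_h^n=(\beta^n)^T\Phi_k$, this is exactly $E[u_h^{n+1}]\le E[u_h^n]$ for every step lying in the interval.

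\textbf{Step 2 (basis updates).} Let $t_n=T_k$ with $k\ge1$. The solution at $T_k$ computed in the old basis is $u_h(\cdot,T_k^-)=(\beta^{n})^T\Phi_{k-1}$. The new basis $\Phi_k$ has first entry $u_h(\cdot,T_k^-)$, and the new coefficients are initialized as $\beta(T_k)=[1,0,\dots,0]^T$. Hence $\beta(T_k)^T\Phi_k=u_h(\cdot,T_k^-)$ identically, so the representation at $t_n$ is the same function in both bases. Consequently $E$ evaluated at $u_h^n$ is unambiguous and does not jump at the update. The step from $t_n$ to $t_{n+1}$ is then a Step 1 step in $\mathcal V_k$, which gives $E[u_h^{n+1}]\le E[u_h^n]$. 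For $k=0$ the same identity holds with $u_0$ in place of $u_h(\cdot,T_0^-)$.

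\textbf{Step 3 (conclusion).} Because $\{T_k\}\subseteq\{t_n\}$, every pair $(t_n,t_{n+1})$ lies in a single interval $[T_k,T_{k+1}]$. Steps 1 and 2 therefore cover all $n$, and the inequality holds for every $n\ge0$. Telescoping then yields $E[u_h^n]\le E[u_h^0]=E[u_0]$.

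\textbf{Main obstacle.} I expect the main difficulty to be justifying that the integrator's energy law applies in the new basis $\Phi_k$. This requires $\mathbf M_k$ to be symmetric positive definite, i.e.\ the augmented basis $\Phi_k$ must be linearly independent. That can fail if $u_h(\cdot,T_k^-)$ already lies in $\mathrm{span}\{\phi_i(\cdot,T_k;\theta^*),1\}$. I would first add this as an explicit standing assumption, matching the hypothesis of Theorem~\ref{thm:energy_stable}. Alternatively, I would argue that removing a dependent basis function leaves the trial space, and hence the Galerkin dynamics and the energy law, unchanged. Either way the energy law is a statement about functions in $\mathcal V_k$, not about a particular basis.
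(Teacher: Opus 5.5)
Your proposal is correct and follows essentially the same argument as the paper: energy decrease within each frozen-basis interval comes directly from the assumed energy-stable integrator (the paper additionally notes that the projected system is a finite-dimensional gradient flow $\dot\beta=\mathbf{Q}\nabla_\beta\mathcal E$ with $\mathbf{Q}=\mathbf{M}^{-1}\mathbf{G}\mathbf{M}^{-1}$), and the energy does not jump at an update because $u_h(\cdot,T_k^-)$ is the first element of $\Phi_k$ and $\beta(T_k)=[1,0,\dots,0]^T$. You also make explicit two points the paper leaves implicit: that $\{T_k\}\subseteq\{t_n\}$ places every step in a single subinterval, and that the augmented basis must be linearly independent (in practice the paper uses a truncated-SVD orthogonalization that keeps $u_h(\cdot,T_k^-)$); both are sensible refinements.
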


\begin{proof}

On each subinterval $[T_k,T_{k+1}]$, the neural trial space
$\mathcal{V}_k$ remains fixed. The Galerkin discretization
then yields a finite-dimensional dynamical system for the
coefficient vector $\beta(t)$. The resulting evolution equation
can be written in the gradient flow form
\begin{equation}
\dot{\beta} = \mathbf{Q}\nabla_\beta \mathcal E(\beta),
\end{equation}
where $\mathcal E(\beta)=E[u_h]$ denotes the discrete
energy and $\mathbf{Q} = \mathbf{M}^{-1}\mathbf{G}\mathbf{M}^{-1}$ is a symmetric negative semidefinite matrix
determined by the mass matrix of the basis functions.
Therefore, the semi-discrete system is itself a
finite-dimensional gradient flow.

By assumption, the time integration scheme applied to this
system satisfies a discrete energy dissipation property.
Consequently, the numerical solution satisfies
\begin{equation}
E[u_{h}^{n+1}] \le E[u_h^{n}],
\end{equation}
for all time steps $t_n,t_{n+1}\in[T_k,T_{k+1}]$.

Next consider a basis update time $T_k$.
The adaptive construction $\Phi_k$ includes the previous
numerical solution $u_h(\cdot,T_k^-)$ as the first basis
function of the new trial space.
With the initial coefficient vector
\begin{equation}
\beta(T_k^+)=[1,0,\dots,0]^T,
\end{equation}
the solution is preserved exactly,
\begin{equation}
u_h(x,T_k^+)
=
\beta(T_k^+) \cdot \Phi_k(x)
=
u_h(x,T_k^-).
\end{equation}
Consequently, the discrete energy is continuous at the
update time,
\begin{equation}
E[u_h(\cdot,T_k^+)]=E[u_h(\cdot,T_k^-)].
\end{equation}

Combining the energy dissipation within each subinterval
with the continuity of the energy at the update points
yields the global estimate
\begin{equation}
E[u_{h}^{n+1}] \le E[u_h^{n}], \qquad \forall n\ge0 .
\end{equation}
\end{proof}

%%SECTION 4
\section{Numerical Experiments} \label{sec::numerical}
\label{sec:experiments}
\subsection{Settings}
Unless otherwise specified in the individual examples, all neural networks 
considered in this work are fully-connected feedforward networks with hyperbolic tangent ($\tanh$) 
activation functions. The resulting semi-discrete Galerkin system 
\eqref{eq::ode_system} is solved using a second-order stabilized Implicit-Explicit Runge-Kutta method (IMEX-RK2) \cite{fu_energydiminishingimplicitexplicit_2024} 
with a uniform time step size $\Delta t = 10^{-3}$. 
Spatial integrals in the Galerkin formulation are evaluated using Gaussian 
quadrature rules or Monte Carlo integration for mass matrix $\mathbf{M}$ and stiffness matrix $\mathbf{K}$, where 
\begin{equation}
\mathbf{M}_{ij}
=
\int_{\Omega} \phi_i\phi_j\mathrm{d}x,\quad \mathbf{K}_{ij} = \int_{\Omega} \nabla \phi_i \cdot \nabla \phi_j\mathrm{d}x. 
\end{equation}

Neural basis functions are constructed from networks whose weights are initialized using the LeCun initialization,
and biases are initialized to zero, resulting in random neural features.
When specified, the structured first-layer initialization (SFLI) is 
applied to the first hidden layer. 

When pre-training is employed, it corresponds to a standard Physics-Informed 
Neural Network (PINN) training procedure, which also serves as a baseline method 
in the numerical comparisons. The PINN training is performed using the Adam 
optimizer for 30000 steps, with an initial learning rate of $10^{-3}$ and a cosine-decay 
learning-rate schedule. Training samples are uniformly and randomly generated in 
the spatio-temporal domain and regenerated at every optimization step.

To mitigate the ill-conditioning of the mass matrix caused by nearly linearly 
dependent neural basis functions, we apply a discrete orthogonalization 
procedure based on truncated singular value decomposition. 
At each restart time $T_k$, the raw neural basis functions are evaluated at Gaussian quadrature nodes and assembled into a weighted basis matrix.
Singular values below a prescribed tolerance relative to the largest singular value 
are truncated to construct an approximately $L^2$-orthonormal basis. The initial 
condition basis function $u_h(x,T_k^-)$ is explicitly preserved in the resulting augmented
basis. This preprocessing step improves the conditioning of the semi-discrete system while preserving the approximation capacity of the trial space.

The numerical accuracy is evaluated using the relative $L^2$ error

\begin{equation}
e(t) =
\frac{\|u_h(\cdot,t)-u_{\mathrm{ref}}(\cdot,t)\|_{L^2(\Omega)}}
{\|u_{\mathrm{ref}}(\cdot,t)\|_{L^2(\Omega)}},
\end{equation}
where $u_{\mathrm{ref}}$ denotes a highly resolved reference solution or an analytical solution. 
For the Allen--Cahn equations, the reference solutions are computed using spectral 
methods in space combined with the fourth-order exponential time-differencing 
Runge--Kutta scheme \cite{kassam_fourthordertimesteppingstiff_2005}.

The network architectures and example-specific training hyperparameters are summarized 
in Table~\ref{tab:hyperparameters}.

\begin{table}[htbp]
	\centering
	\caption{Summary of network architectures and example-specific hyperparameters for each numerical example.}
	\label{tab:hyperparameters}
	% Scale the table to fit the text width
	\resizebox{\textwidth}{!}{
		\begin{tabular}{lccccc}
			\toprule
			\textbf{Example} & \textbf{Hidden Layers} & \textbf{Batch Size} & \textbf{Opt. Steps} & \textbf{Quad. Pts} & \textbf{Subintervals ($\mathcal{T}_{\text{list}}$)} \\
			\midrule
			\ref{eg:heat_high_d} (5D Heat) & $[50, 50, 50, 900]$ & $10,000$ & $30,000^*$ & $12$/dim & $[0.0, 1.0]$ \\
			\ref{eg:heat_high_d} (10D Heat) & $[50, 50, 50, 900]$ & $10,000$ & $30,000^*$ & $N_{\text{MC}}$=$10^4$ to $10^7$ & $[0.0, 1.0]$ \\
			\ref{eg:1d_AC} (1D AC) & $[128, 128, 128]$ & $8,192$ & $30,000$ & $1024$ & $\{0.0, 0.2, \dots, 1.0\}$ \\
			\ref{eg:ac_2d_circle} (2D AC, Bubble) & $[50, 50, 50, 64/1024]$ & $15,000$ & $30,000^*$ & $128$/dim & $\{0.0, 0.1, \dots, 1.0\}$ \\
			\ref{eg:ac_2d_star} (2D AC, Star) & $[50, 50, 50, 1024]$ & $20,000$ & $30,000^*$ & $128$/dim & $\{0.0, 0.5, \dots, 5.0\}^\dagger$ \\
			\ref{eg:ac_2d_random} (2D AC, Random) & $[50, 50, 50, 1024]$ & $10,000$ & $30,000^*$ & $128$/dim & $\{0.0, 0.1, \dots, 1.0\}^\dagger$ \\
			\ref{eg:ch_1d} (1D CH) & $[128, 128, 128]$ & $8,192$ & $30,000$ & $1024$ & $\{0.0, 0.005, 0.01, 0.015, 0.04, 0.08, 0.12, 0.16, 0.2\}$ \\
			\bottomrule
		\end{tabular}
	} % End of resizebox
	
	\vspace{0.15cm}
	\parbox{\textwidth}{\footnotesize
		\textit{Note:} All networks use $\tanh$ activations. 
        The optimization steps correspond to the number of Adam iterations.
		$^*$Indicates an additional 1500 L-BFGS refinement iterations with the batch size increased by a factor of four.
        The subintervals ($T_{\text{list}}$) indicate the restart points for the adaptive basis in time evolution.
        $^\dagger$For the untrained SFLI-DNN-G scheme in Ex.~\ref{eg:ac_2d_star} and \ref{eg:ac_2d_random}, 
        a single global interval ($[0.0,5.0]$ and $[0.0,1.0]$, respectively) is employed instead of adaptive subintervals. }
\end{table}

\subsection{Heat Equation}
To examine the high-dimensional behavior and temporal accuracy 
of the proposed DNN-Galerkin method, 
we consider the $d$-dimensional heat equation, 
which serves as a prototypical linear $L^2$ gradient flow. 
As the spatial dimension increases, classical mesh-based discretizations 
suffer from the exponential growth of grid points, making them 
computationally prohibitive. 
This example therefore provides a natural benchmark 
for assessing the approximation efficiency of neural trial spaces 
in high-dimensional settings.
\begin{example}[High-dimensional Heat Equation]
    Consider the $d$-dimensional heat equation on the domain $\Omega = [-0.5, 0.5]^d$ with homogeneous
     Dirichlet boundary conditions:
    \begin{equation}
        u_t = \frac{1}{d\pi^2}\Delta u, \quad \mathbf{x} \in \Omega, \quad t \in [0, 1],
    \end{equation}
   which corresponds to the $L^2$ gradient flow associated with the Dirichlet energy
   \begin{equation}
    E[u] = \frac{1}{2} \int_{\Omega} |\nabla u|^2 d\mathbf{x}.
\end{equation}
    The initial condition is chosen as:
    \begin{equation}
        u_0(\mathbf{x}) = \prod_{i=1}^d \cos(\pi x_i),
        \label{eq:heat_init}
    \end{equation}
    for which the exact solution reads:
    \begin{equation}
        u(\mathbf{x}, t) = e^{-t}\prod_{i=1}^d \cos(\pi x_i).
        \label{eq:heat_exact}
    \end{equation}
    \label{eg:heat_high_d}
\end{example}

The semi-discrete system \eqref{eq::ode_system} 
is integrated using second- and third-order 
Diagonally Implicit Runge-Kutta (DIRK) schemes \cite{kennedy_diagonallyimplicitrunge_2019}. 
The neural trial space consists of $m=900$ frozen random features 
generated from a fully connected network. 
Homogeneous Dirichlet boundary conditions are enforced by the modified basis
\[
\tilde{\phi}_i(\mathbf{x};\theta)
= \phi_i(\mathbf{x};\theta)\prod_{j=1}^d \left(\frac{1}{4}-x_j^2\right).
\]

All spatial integrals are evaluated using Gaussian quadrature for $d=5$ 
and Monte Carlo integration for higher dimensions.

Figure~\ref{fig:heat_5d_convergence} shows the temporal convergence results for $d=5$. 
The DNN-G scheme follows the expected convergence orders 
of the underlying DIRK integrators. 
This confirms that the Galerkin projection preserves the temporal accuracy 
of the time discretization within the neural trial space.

\begin{figure}[htbp]
    \centering
    \includegraphics[width=\textwidth]{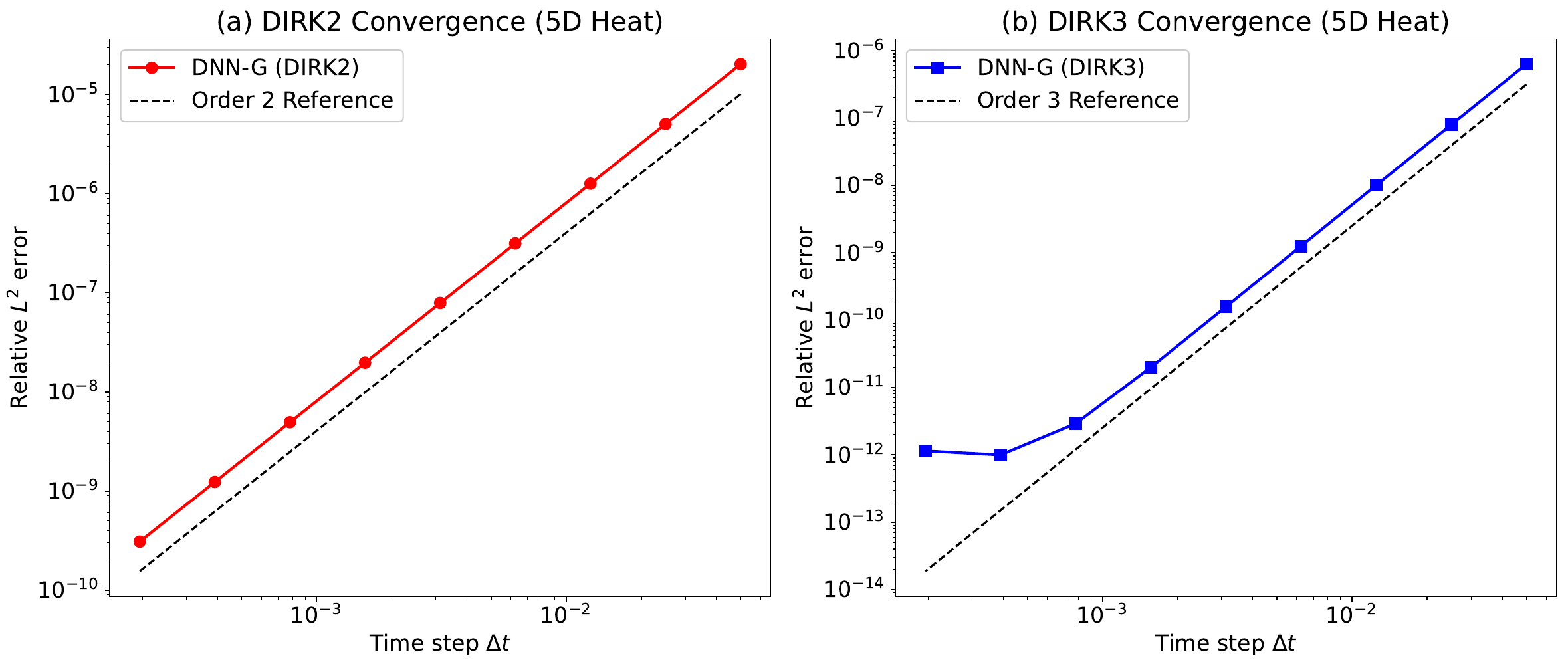}
    \caption{Temporal convergence test for the 5D heat equation (Example \ref{eg:heat_high_d}, $d=5$). 
    (a) The second-order DIRK scheme (DIRK2) exhibits an $O(\Delta t^2)$ convergence rate. 
    (b) The third-order DIRK scheme (DIRK3) achieves $O(\Delta t^3)$ accuracy, eventually reaching a plateau due to spatial discretization limits.}
    \label{fig:heat_5d_convergence}
\end{figure}

For the DIRK3 scheme, the error curve eventually reaches a plateau 
as $\Delta t$ decreases, indicating a transition from a time-discretization-dominated regime 
to a spatial-approximation-dominated regime. 
The plateau level reflects the intrinsic approximation error 
of the neural trial space.
To further assess the accuracy, we compare the relative $L^2$ error 
at $t=1$ with a standard PINN under the same setup. 
The PINN attains a relative $L^2$ error of $1.54 \times 10^{-4}$, 
whereas the DNN-G method achieves $2.57 \times 10^{-11}$ when $d=5$. 
This result highlights the accuracy advantage of the Galerkin formulation.

We further extend the test to a higher-dimensional regime ($d=10$), 
where the spatial integrals in the variational formulation are approximated 
using Monte Carlo integration. 
In this setting, the statistical accuracy of the sampled mass matrix 
becomes a dominant source of spatial error. 

As shown in Fig.~\ref{fig:heat_10d_mc_convergence}, 
once the number of quadrature points reaches $N_{\text{MC}}=10^6$, 
the DNN-G scheme consistently outperforms the standard PINN. 
Despite the statistical noise introduced by high-dimensional integration, 
the DNN-G framework preserves the gradient-flow structure. 
Its energy dissipation closely follows the analytical trajectory, 
whereas the optimization-based PINN exhibits noticeable deviations.

\begin{figure}[htbp]
    \centering
    \includegraphics[width=\textwidth]{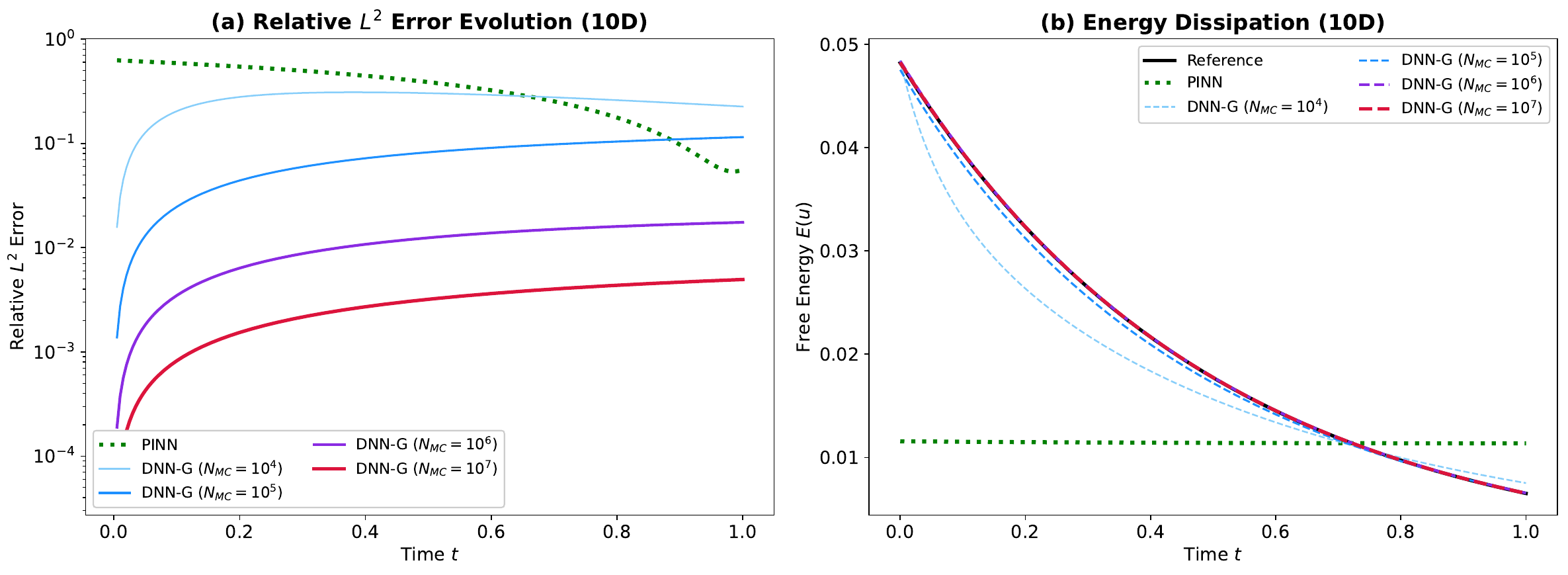}
    \caption{Quantitative comparison for the 10D heat equation (Example \ref{eg:heat_high_d}, $d=10$). (a) Relative $L^2$ error evolution. (b) Energy dissipation. 
    The standard PINN (green dotted line) is compared against the DNN-G scheme, where the integrals are approximated using varying numbers of Monte Carlo points
    ($N_{\text{MC}} = 10^4$ to $10^7$).}
    \label{fig:heat_10d_mc_convergence}
\end{figure}

We remark that this level of accuracy is obtained 
with only $m=900$ basis functions. 
For comparison, a finite difference discretization 
with approximately the same number of degrees of freedom 
($4^5 \approx 1024$ in five dimensions and $2^{10} \approx 1024$ in ten dimensions) 
would correspond to a mere four and two grid points per spatial direction, respectively.
This illustrates the efficiency of neural trial spaces 
for high-dimensional problems.

\subsection{Allen--Cahn Equation}

The Allen--Cahn equation is a classical phase-field model 
describing the $L^2$ gradient flow of the Ginzburg--Landau free energy 
\cite{allen_microscopictheoryantiphase_1979,shen_numericalapproximationsallencahn_2010}. 
It serves as a prototypical nonlinear dissipative system 
for studying phase separation and interface dynamics.
Given the Ginzburg--Landau free energy:
\begin{equation}
    E[u] = \int_{\Omega} \left( \frac{\epsilon^2}{2} |\nabla u|^2 + F(u) \right) \mathrm{d}x,
    \label{eq:free_energy_ac}
\end{equation}
where $F(u) = \frac{1}{4}(u^2 - 1)^2$ is the double-well potential, 
the corresponding $L^2$ gradient flow reads:
\begin{equation}
    u_t = \epsilon^2 \Delta u - \kappa f(u), \quad f(u) = F'(u) = u^3 - u.
\end{equation}
This equation models the phase separation process, where the system evolves towards minimizing the free energy by forming
interfaces between the two stable phases $u = \pm 1$.

%\subsubsection{1D Case}
\subsubsection{1D Case}

\begin{example}[1D Allen--Cahn equation]
We consider the one-dimensional problem:
    \begin{equation}
        u_t = \epsilon^2 u_{xx} - \kappa f(u), \quad x \in [-1, 1], \quad t \in [0, 1],
    \end{equation}
    where $\epsilon = 0.01$ and $\kappa = 5$. The initial condition is set as $u_0(x) = x^2\cos(\pi x)$.
    \label{eg:1d_AC}
\end{example}

In this example, the initial condition is enforced via a hard constraint:
$u_{p}(x,t;\theta) = u_0(x) + t \cdot \mathcal{N}(x,t;\theta)$.
The neural basis functions are pre-trained via this PINN-based procedure, 
after which the adaptive DNN-Galerkin (ADNN-G) framework 
is employed for time evolution. 
To discretize the semi-discrete system \eqref{eq::ode_system}, 
we consider two representative schemes.
\begin{itemize}
\item \textbf{Stabilized Semi-Implicit (SSI1) Scheme}. 
A first-order scheme treating the diffusion term implicitly 
and the nonlinear term explicitly. 
A linear stabilization term with parameter $S>0$ is introduced 
to enhance energy stability:
\begin{equation}
\mathbf{M} 
\frac{\beta^{n+1} - \beta^n}{\Delta t}
= -\epsilon^2 \mathbf{K}\beta^{n+1}
- \mathbf{g}(\beta^n)
- S\mathbf{M}(\beta^{n+1} - \beta^n),
\end{equation}
where \begin{equation}
    g_j(\beta^n)
    = \int_{-1}^1 f(u_h(x,t_n)) \phi_j(x) \mathrm{d}x.
\end{equation}
 
\item \textbf{Energy-Stabilized IMEX-RK2 Scheme} \eqref{eq::imex-rk2}.
% For the semi-discrete system, we adopt the energy-stabilized two-stage IMEX strategy proposed in \cite{fu_energydiminishingimplicitexplicit_2024}. 
% To unconditionally preserve the energy dissipation property, stabilization parameters $\alpha \ge 0$ and $S \ge 0$ are introduced. 
% The stiff linear diffusion term is treated implicitly while the nonlinear force term is handled explicitly. 
% The fully discrete scheme computes the intermediate state $\beta^*$ and the final state $\beta^{n+1}$ as follows:

% \textbf{Stage 1:}
% \begin{equation}
%     \begin{aligned}
%         \mathbf{M} \frac{\beta^* - \beta^n}{\gamma \Delta t} 
%         &= - \epsilon^2 \mathbf{K} \beta^* - \mathbf{r}(\beta^n) \\
%         &\quad - \alpha \epsilon^2 \mathbf{K} (\beta^* - \beta^n) 
%         - S \mathbf{M} (\beta^* - \beta^n),
%     \end{aligned}
% \end{equation}

% \textbf{Stage 2:}
% \begin{equation}
%     \begin{aligned}
%         \mathbf{M} \frac{\beta^{n+1} - \beta^n}{\Delta t} 
%         &= - \epsilon^2 \mathbf{K} \big[ (1-\gamma) \beta^* + \gamma \beta^{n+1} \big] 
%         - \big[ \delta \mathbf{r}(\beta^n) + (1-\delta) \mathbf{r}(\beta^*) \big] \\
%         &\quad - \alpha \epsilon^2 \mathbf{K} \big[ \gamma(\beta^{n+1} - \beta^*) + \delta(\beta^* - \beta^n) \big] \\
%         &\quad - S \mathbf{M} \big[ \gamma(\beta^{n+1} - \beta^*) + \delta(\beta^* - \beta^n) \big],
%     \end{aligned}
% \end{equation}
% where the scheme parameters are $\gamma = 1 - 1/\sqrt{2}$ and $\delta = 1 - 1/(2\gamma)$. 
\end{itemize}

Figure \ref{fig:ac_1D_convergence} presents the temporal 
convergence results. 
Both schemes achieve their expected theoretical orders, 
indicating that the Galerkin projection preserves 
the temporal accuracy of the underlying integrators. 
As in the heat equation example, 
the IMEX-RK2 error eventually reaches a plateau, 
corresponding to the spatial approximation limit 
of the neural basis. 
In the following experiments, we adopt IMEX-RK2 
due to its higher temporal accuracy.
\begin{figure}[htbp]
    \centering
    \includegraphics[width=\textwidth]{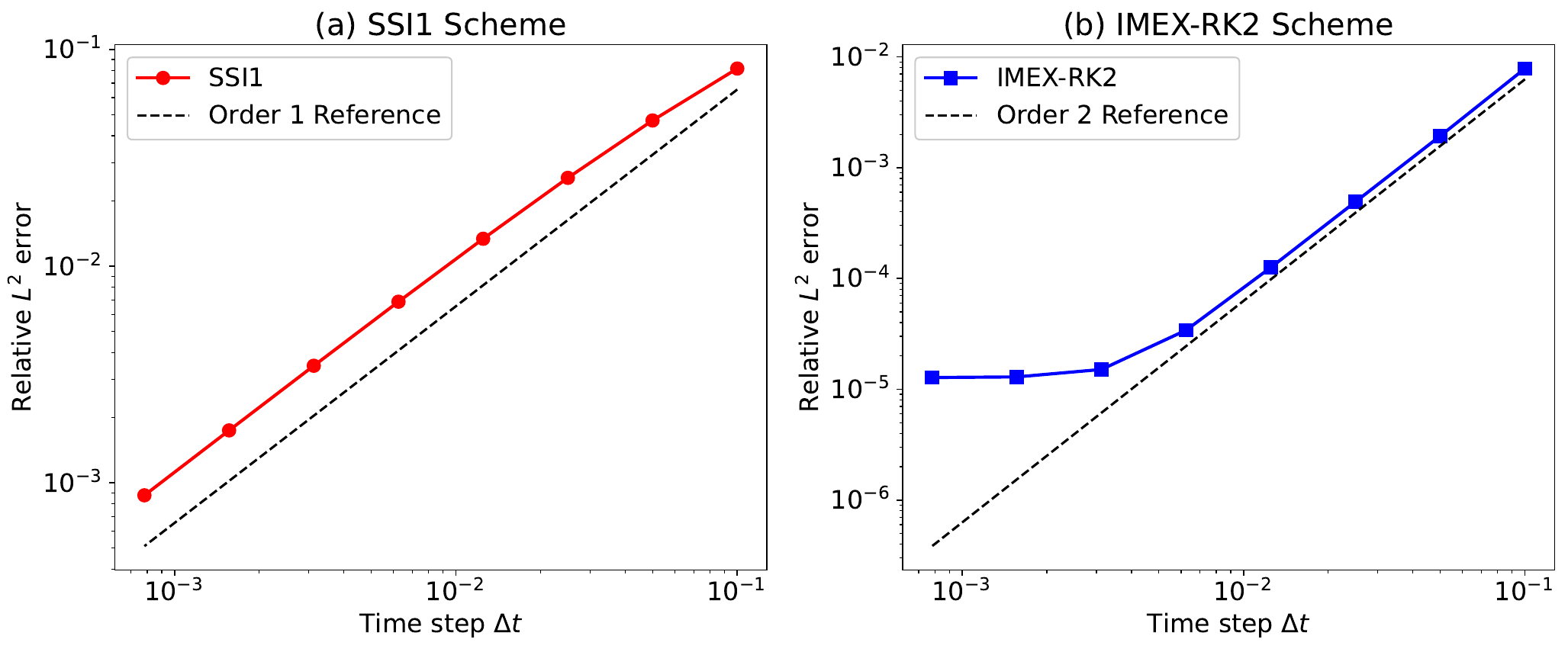}
    \caption{Temporal convergence test for the 1D Allen--Cahn equation (Example \ref{eg:1d_AC}) comparing the SSI1 and IMEX-RK2 schemes with adaptive DNN Galerkin method.}
    \label{fig:ac_1D_convergence}
\end{figure}

Figure \ref{fig:ac_1d_results} compares the DNN-Galerkin solution, 
a standard PINN, and a spectral reference solution. 
Panels (a) and (b) show the solution profiles at $t=0.2$ and $t=1.0$. 
While the PINN captures the overall phase transition, 
visible discrepancies appear near the sharp interface. 
The ADNN-G solution remains in close agreement 
with the spectral reference throughout the evolution.

The relative $L^2$ error evolution in 
Fig.~\ref{fig:ac_1d_results}(c) 
demonstrates that the DNN-G method 
maintains consistently lower errors over time. 
More importantly, 
Fig.~\ref{fig:ac_1d_results}(d) verifies 
the discrete energy dissipation property. 
The DNN-G scheme exhibits monotone decay 
of the free energy, 
consistent with the gradient flow structure. 
In contrast, the PINN solution shows 
noticeable energy gaps, 
reflecting the absence of structural constraints 
in residual-based training.

\begin{figure}[htbp]
    \centering
    \includegraphics[width=\textwidth]{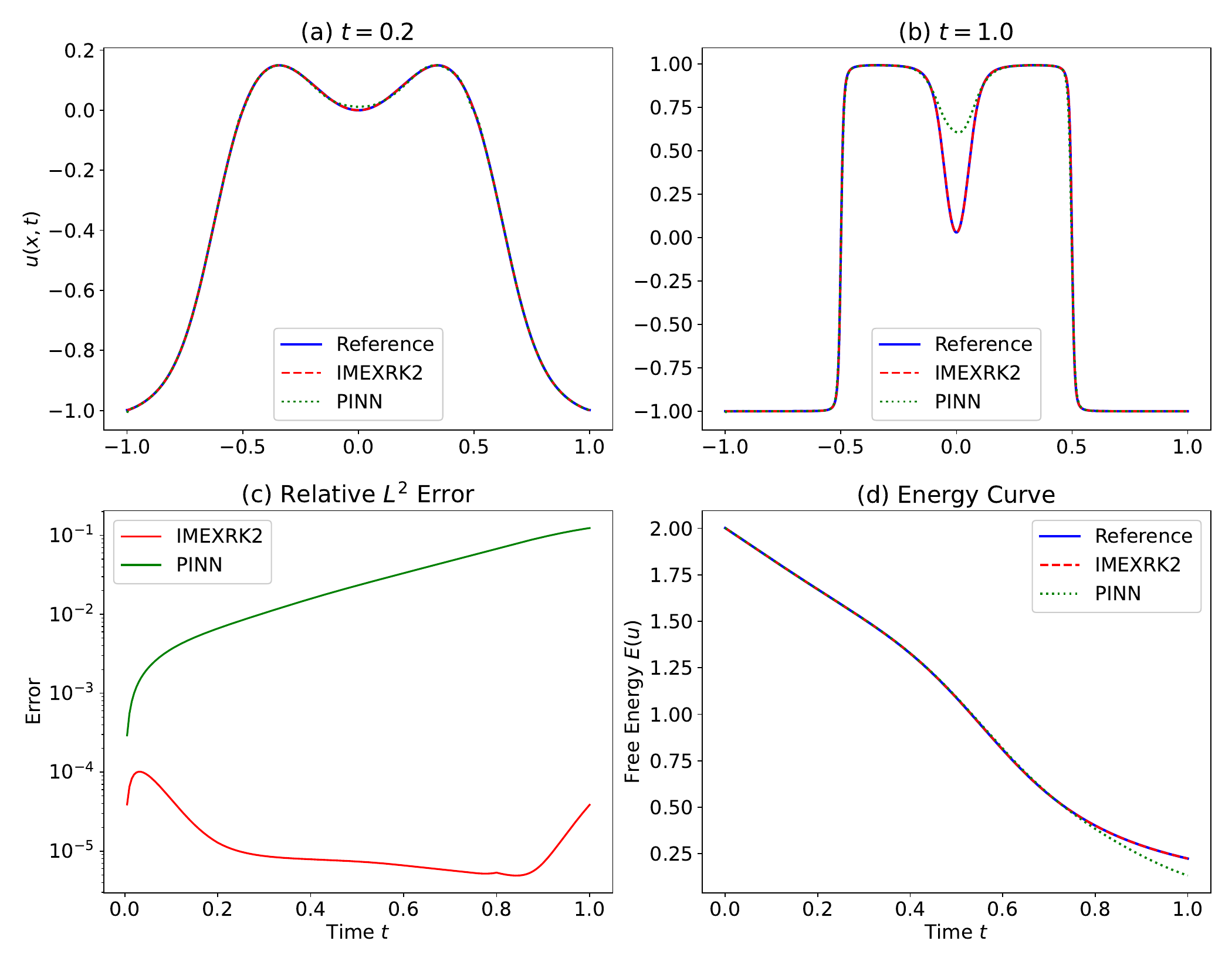}
    \caption{Numerical results for the 1D Allen--Cahn equation (Example \ref{eg:1d_AC}). (a)--(b) Comparison of solution profiles $u(x,t)$. (c) Temporal evolution of the relative $L^2$ error. (d) Free energy dissipation $E(u)$ curves.}
    \label{fig:ac_1d_results}
\end{figure}

\subsubsection{2D Case}

\begin{example}[The evolution of two circular bubbles]
   We consider the two-dimensional Allen--Cahn equation on 
$\Omega = [-0.5, 0.5]^2$:
\begin{equation}
    u_t = \epsilon^2 \Delta u - \kappa f(u), \quad (x,y) \in \Omega, \quad t \in [0, 1],
\end{equation}
where $\epsilon = 0.05$, $\kappa = 5$. 
The initial condition describes two separate circular bubbles:
\begin{equation}
    u_0(x,y) = \max \left\{ \tanh\left(\frac{R - d_1}{\epsilon}\right), \tanh\left(\frac{R - d_2}{\epsilon}\right) \right\},
\end{equation}
where $R=0.15$ is the radius, and $d_{1,2} = \sqrt{(x \pm 0.21)^2 + y^2}$.
\label{eg:ac_2d_circle}
\end{example}
Figure~\ref{fig:ac_2d_evolution} illustrates the evolution process of the proposed 
adaptive DNN Galerkin method (ADNN-G), the standard PINN, 
and the traditional spectral method under equivalent degrees of freedom (DoF). 
Driven by mean curvature, the two nearby interfaces gradually coalesce, 
which serves as a stringent test for accurately resolving high-curvature dynamics.

With $m=1024$ adaptive neural basis functions,
the proposed ADNN-G method accurately captures the merging process
and closely matches the high-resolution spectral reference solution.
Among all tested methods, ADNN-G with $\text{DoF}=1024$ achieves the highest accuracy
and reproduces the energy dissipation curve almost identically to the reference solution.

For a fair comparison under similar degrees of freedom,
we further compare ADNN-G with $\text{DoF}=64$ against the spectral method with
$\text{DoF}=32^2=1024$.
Although the two solutions appear visually similar in the snapshots,
the ADNN-G solution aligns better with the energy dissipation curve.
This indicates that the adaptive neural basis functions provide
a more efficient representation of the evolving interface dynamics.
In contrast, both the standard PINN and the spectral method with
$\text{DoF}=8^2$ fail to capture the interface coalescence within the simulated time horizon,
indicating insufficient resolution to represent the strongly nonlinear dynamics.

The quantitative comparison in Fig.~\ref{fig:ac_2d_analysis} further confirms these observations.
As shown in Fig.~\ref{fig:ac_2d_analysis}(a), ADNN-G with $\text{DoF}=1024$
achieves the smallest relative $L^2$ errors among all methods.
Meanwhile, ADNN-G with only $\text{DoF}=64$ attains an accuracy comparable to
the spectral method with $\text{DoF}=32^2$, despite using significantly fewer degrees of freedom.
This clearly demonstrates a super-approximation effect:
while fixed trigonometric bases require substantially higher resolution
to resolve sharp interfaces, the adaptive neural basis functions
automatically concentrate in high-gradient regions.

Additionally, the energy dissipation curves in
Fig.~\ref{fig:ac_2d_analysis}(b) show that both ADNN-G schemes follow
the reference thermodynamic path more faithfully than the competing methods,
highlighting the structure-preserving property of the proposed framework.

\begin{figure}[htbp]
    \centering
    \includegraphics[width=\textwidth]{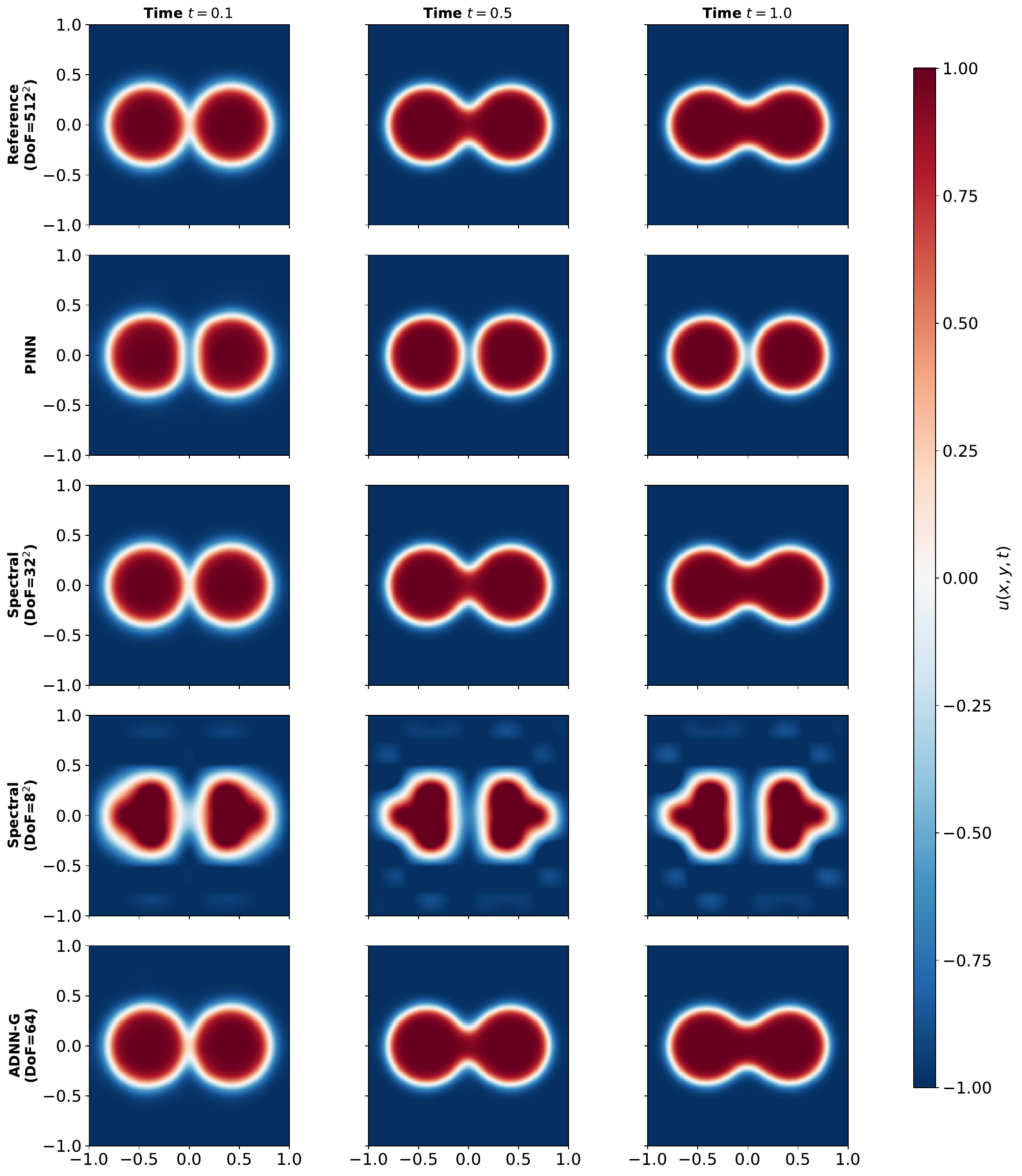}
    \caption{Snapshots of the phase-field $u(x,y,t)$ evolution for the merging process of two
     circular interfaces (Example \ref{eg:ac_2d_circle}). 
     Rows from top to bottom: Reference spectral solution (DoF = $512^2$), standard PINN, 
     spectral solution (DoF = $32^2$ and $8^2$),
     and the proposed adaptive DNN Galerkin (ADNN-G, DoF = 64) scheme at $t=0.1, 0.5$, and $1.0$. }
    \label{fig:ac_2d_evolution}
\end{figure}
\begin{figure}[htbp]
    \centering
    \includegraphics[width=\textwidth]{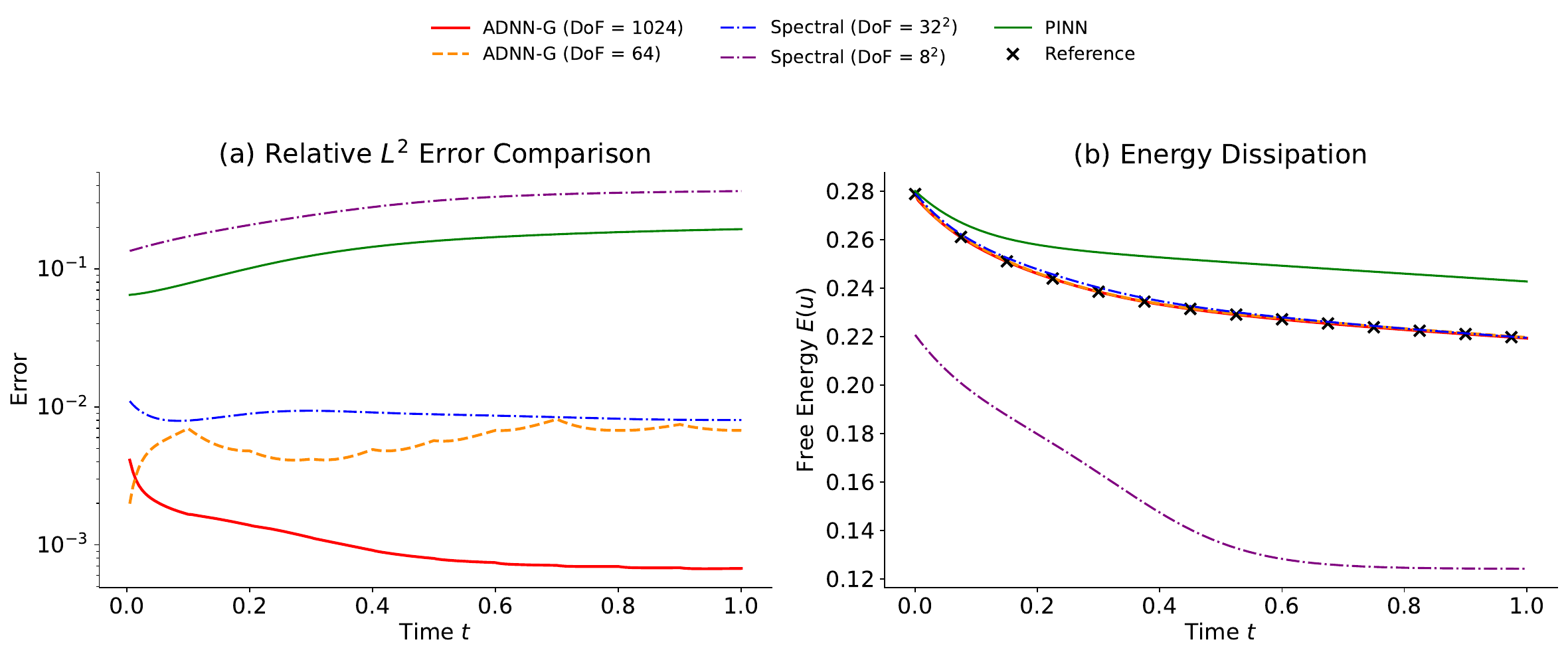}
    \caption{Quantitative comparison for the 2D Allen--Cahn equation (Example \ref{eg:ac_2d_circle}). (a) Relative $L^2$ error evolution (excluding $t=0$ to highlight dynamical differences). (b) Free energy dissipation $E(u)$ comparison.}
    \label{fig:ac_2d_analysis}
\end{figure}

%\subsubsection{2D Case: Non-uniform Curvature and Morphological Relaxation}

\begin{example}[Non-uniform curvature-driven motion]
    To further evaluate the ability of DNN-G to resolve phase-field dynamics with spatially varying curvature, 
we consider a star-shaped interface defined in polar coordinates $(r,\theta)$:
    \begin{equation}
        u_0(r, \theta) = \tanh \left( \frac{R_0 + \delta \cos(k\theta) - r}{\sqrt{2}\epsilon} \right),
    \end{equation}
    where $R_0 = 0.25$, $\delta = 0.05$, and $k=5$.  The parameters $\epsilon = 0.05$ and $\kappa = 5$ 
    are the same as those in Example \ref{eg:ac_2d_circle}.
    \label{eg:ac_2d_star}
\end{example}

The evolution of the star-shaped interface is presented in Fig.~\ref{fig:ac_2d_star_evolution}. 
As illustrated in the snapshots, the standard PINN exhibits a non-physical failure, while 
the RF-DNN-G scheme also struggles to capture the correct dynamics, with the solution remaining close to the initial condition.
In contrast, both ADNN-G and SFLI-DNN-G accurately reproduce the morphological evolution, 
capturing the relaxation from the star-shaped interface toward the circular equilibrium.

\begin{figure}[htbp]
    \centering
    \includegraphics[width=0.99\textwidth]{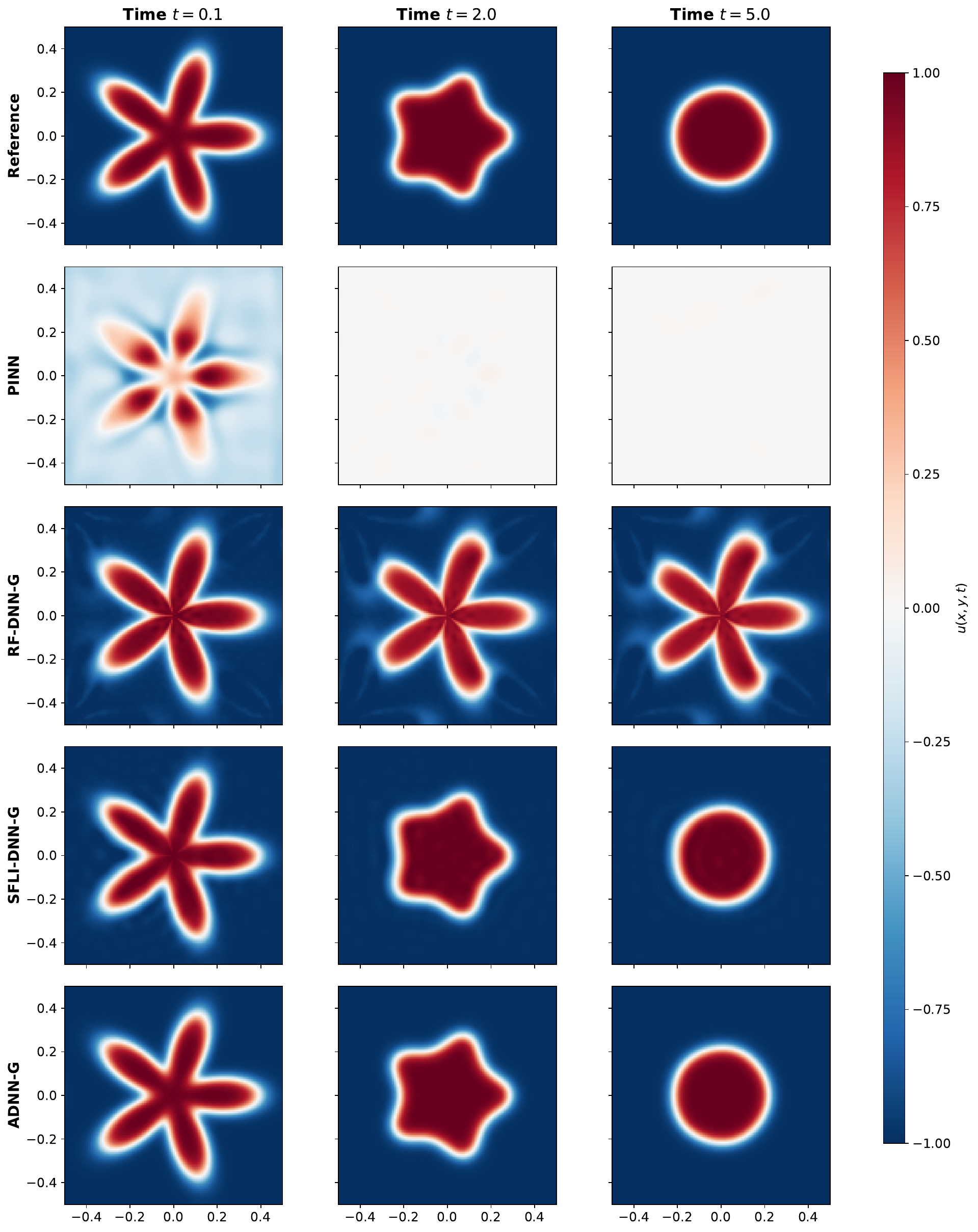}
    \caption{Snapshots of the phase-field evolution for a star-shaped interface (Example \ref{eg:ac_2d_star})  at $t=0.1, 2.0,$ and $5.0$. 
    From top to bottom: Reference spectral solution, standard PINN, random feature based DNN-Galerkin scheme (RF-DNN-G), SFLI based DNN-Galerkin scheme (SFLI-DNN-G), 
    and the adaptive DNN-Galerkin scheme (ADNN-G).}
    \label{fig:ac_2d_star_evolution}
\end{figure}

Quantitative evaluations in Fig.~\ref{fig:ac_2d_star_analysis} corroborate these observations. 
As shown in Fig.~\ref{fig:ac_2d_star_analysis}(a), the relative $L^2$ errors of SFLI-DNN-G and ADNN-G remain 
significantly lower than those of PINN and RF-DNN-G throughout the simulation. 
The free energy evolution in Fig.~\ref{fig:ac_2d_star_analysis}(b) further illustrates the 
structure-preserving property of the Galerkin formulation. 
Both SFLI-DNN-G and ADNN-G closely follow the reference energy dissipation curve. 
Although RF-DNN-G yields poor predictive accuracy, its Galerkin structure ensures that the energy remains non-increasing. 
In contrast, the PINN solution exhibits both large errors and non-physical energy growth, 
highlighting the difficulty of directly minimizing residual losses for stiff gradient flows.
\begin{figure}[htbp]
    \centering
    \includegraphics[width=\textwidth]{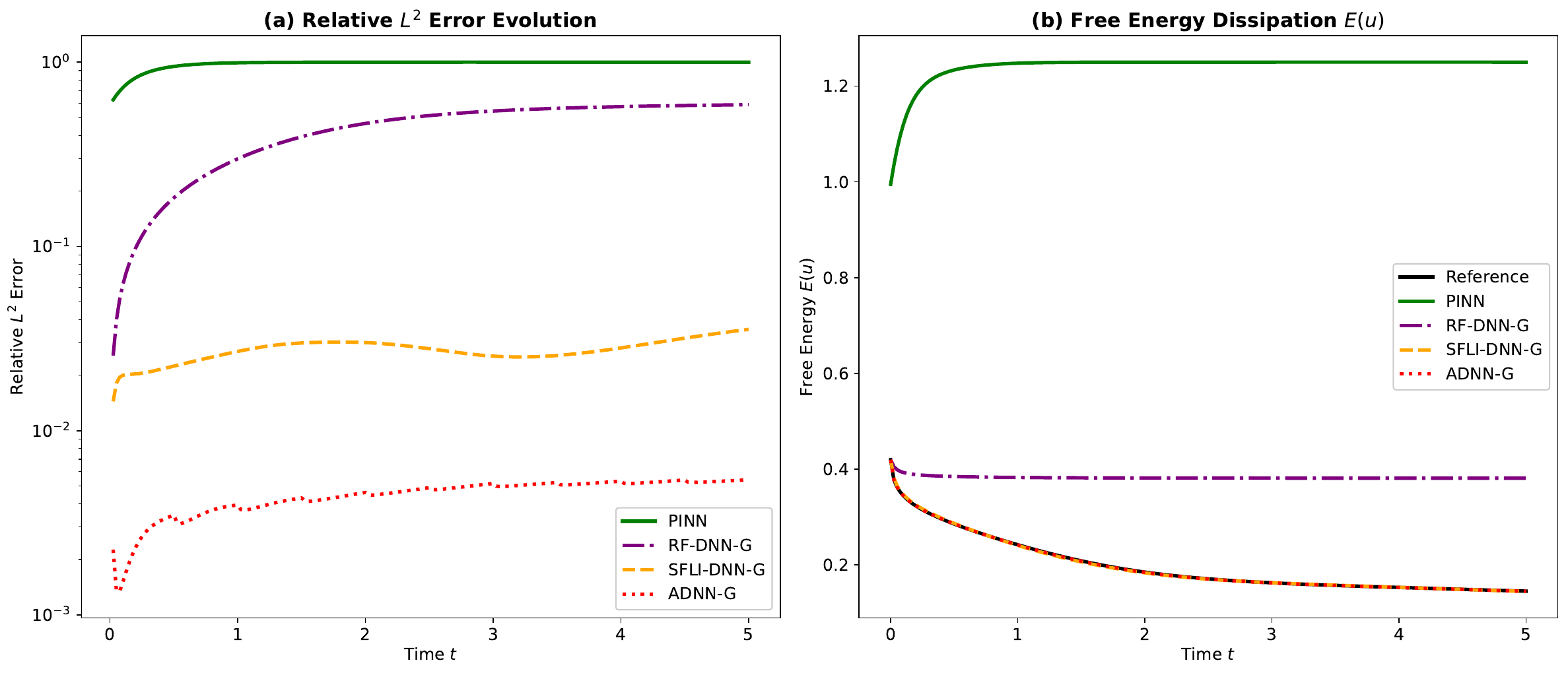}
    \caption{Quantitative comparison for the star-shaped interface relaxation (Example \ref{eg:ac_2d_star}). (a) Relative $L^2$ error evolution. (b) Free energy dissipation $E(u)$.}
    \label{fig:ac_2d_star_analysis}
\end{figure}

A key insight from this example is the essential role of basis adaptivity. 
The poor performance of RF-DNN-G indicates that a structure-preserving Galerkin projection alone cannot compensate 
for an inadequate trial space. In contrast, the superior accuracy of ADNN-G compared with SFLI-DNN-G highlights a 
synergistic effect: although direct PINN optimization fails to recover the correct macroscopic dynamics, 
the training process implicitly extracts informative nonlinear features that align the neural basis functions with the underlying solution manifold. 
The subsequent Galerkin projection then exploits these learned bases to accurately recover the dynamics. 
This observation suggests that while residual minimization may struggle as a standalone solver for stiff gradient flows, 
it can serve as an effective mechanism for constructing an expressive low-dimensional trial space for Galerkin-based solvers.

We further examine the proposed DNN Galerkin framework in a more demanding regime: 
coarsening dynamics initiated from a random state. 
This setup involves multiple simultaneous coalescence events and rapidly evolving 
multi-scale structures, posing a stringent test for neural-network-based PDE solvers. 
To the best of our knowledge, existing neural-network approaches have not demonstrated 
stable and physically consistent simulations of the Allen--Cahn equation starting from 
fully random initial data, making this scenario particularly challenging.

\begin{example}[Coarsening from a random initial condition]
   The governing equation and parameters are the same as in 
Example~\ref{eg:ac_2d_circle}.
    The initial condition is sampled from a uniform distribution:
    \begin{equation}
        u_0(x,y) \sim \mathcal{U}(-1,1), \quad (x,y) \in \Omega.
    \end{equation}
Since a purely random field lacks spatial regularity, 
we project $u_0$ onto a finite-dimensional spectral basis to obtain a smooth but highly oscillatory initial configuration.

        \label{eg:ac_2d_random}
\end{example}

\begin{figure}[htbp]
    \centering
    \includegraphics[width=0.95\textwidth]{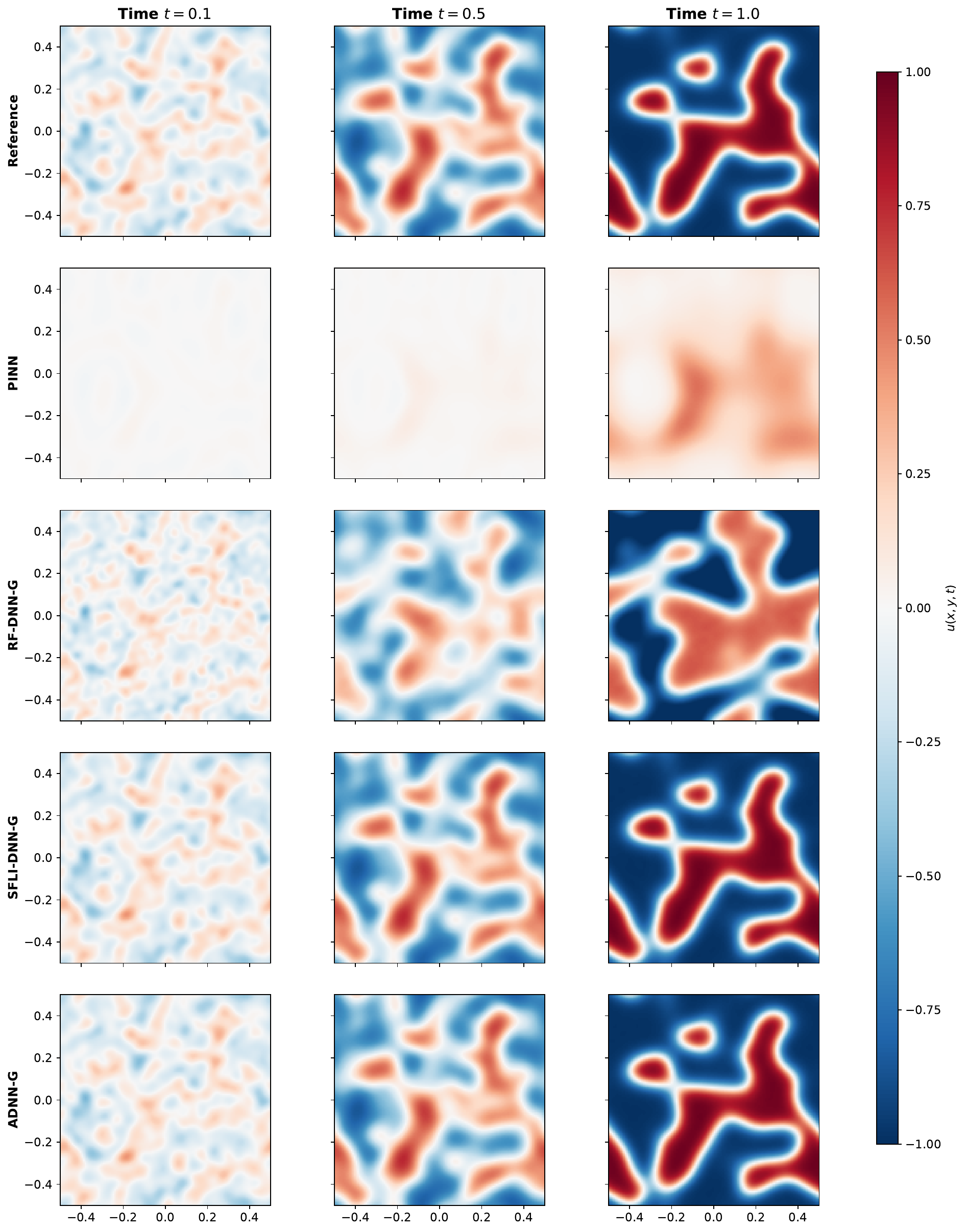}
    \caption{Snapshots of the phase-field $u(x,y,t)$ evolution for the random initial condition 
    (Example \ref{eg:ac_2d_random}) at $t=0.1$, $0.5$, and $1.0$. 
    From top to bottom: Reference spectral solution, standard PINN, random feature based DNN-Galerkin scheme (RF-DNN-G), SFLI based DNN-Galerkin scheme (SFLI-DNN-G), 
    and the adaptive DNN-Galerkin scheme (ADNN-G).}
    \label{fig:ac_2d_rand_evolution}
\end{figure}
\begin{figure}[htbp]
    \centering
    \includegraphics[width=0.75\textwidth]{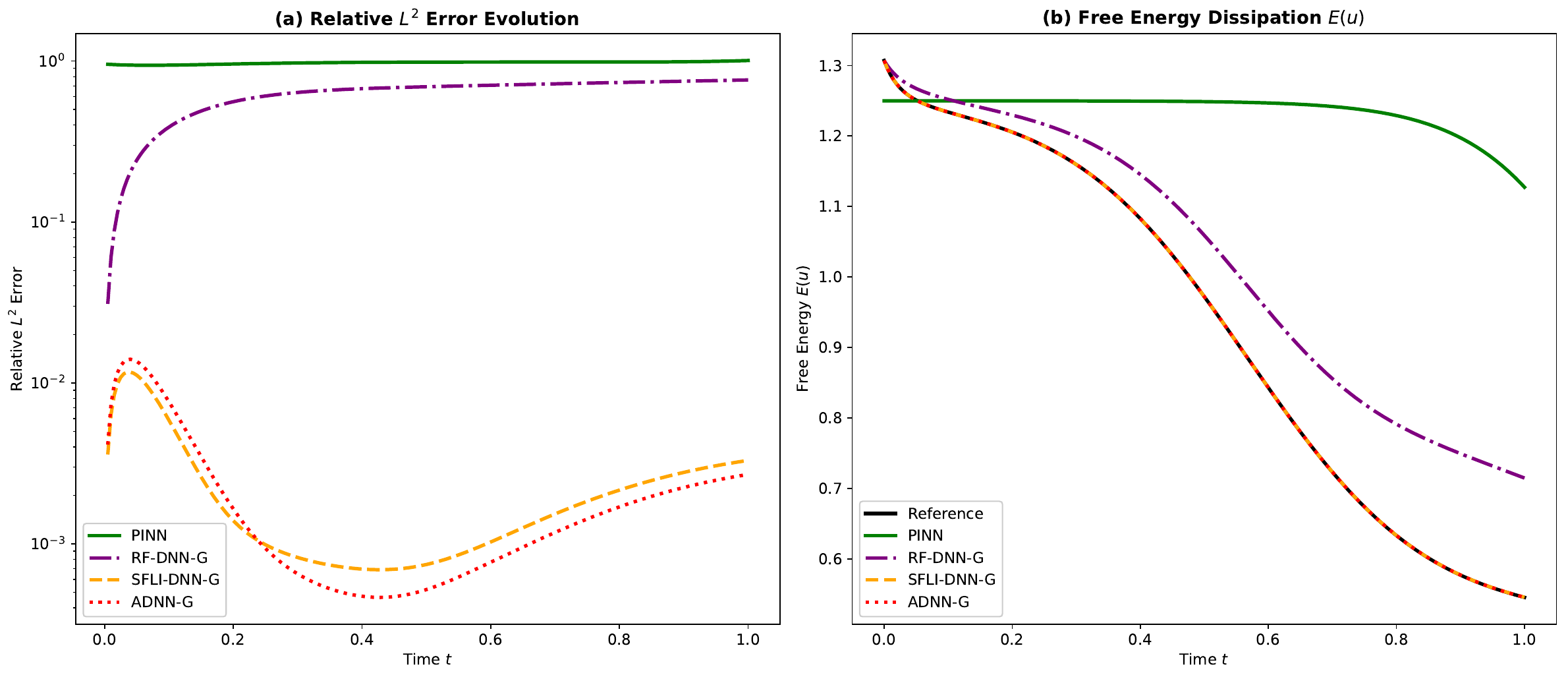}
    \caption{Quantitative comparison for the coarsening dynamics from a random initial condition (Example \ref{eg:ac_2d_random}).
     (a) Relative $L^2$ error evolution. (b) Free energy dissipation $E(u)$ comparison.}
    \label{fig:ac_2d_rand_energy}
\end{figure}

The numerical results in 
Fig.~\ref{fig:ac_2d_rand_evolution} and 
Fig.~\ref{fig:ac_2d_rand_energy} 
demonstrate a clear contrast between optimization-based and Galerkin-based approaches. 
The standard PINN struggles to represent the rapidly evolving multi-scale structures, 
leading to noticeable deviations from the reference solution even at early times. 
The RF-DNN-G scheme exhibits intermediate performance: although the Galerkin projection 
improves the qualitative topological evolution compared with PINN, its fixed random trial 
space lacks sufficient expressivity to accurately resolve the interface dynamics. 
In contrast, both SFLI-DNN-G and ADNN-G successfully reproduce the coarsening process, 
capturing successive interface mergers while maintaining consistent energy dissipation.

These results highlight the importance of constructing an expressive trial space 
for complex coarsening dynamics. 
Compared with the inadequate expressivity of random features, the neural basis 
generated by SFLI already provides an effective static representation of the 
solution manifold. 

More importantly, the results emphasize that the structure-preserving Galerkin 
formulation plays a central role in ensuring physical consistency. 
By embedding the gradient-flow structure into the numerical scheme, the 
DNN-Galerkin framework guarantees stable energy dissipation even during 
highly nonlinear, multi-scale evolution, whereas direct residual minimization 
alone struggles to maintain such physical properties.

\subsection{Cahn--Hilliard equation}

We further evaluate the proposed framework on the Cahn--Hilliard equation,
which describes the $H^{-1}$ gradient flow of the Ginzburg--Landau free energy
\cite{cahn_freeenergynonuniform_1958}.
Compared with the Allen--Cahn equation, the Cahn--Hilliard model involves
fourth-order spatial derivatives and mass conservation,
resulting in significantly stronger numerical stiffness.
This example therefore provides a stringent test for neural-network-based
solvers for high-order phase-field dynamics.

\begin{example}[Cahn--Hilliard equation]
Consider the one-dimensional Cahn--Hilliard equation
\begin{equation}
u_t = -\epsilon^2 u_{xxxx} + (u^3-u)_{xx},
\quad x \in [-1,1], \quad t \in [0,1].
\end{equation}
The initial condition is
$
u_0(x)=\cos(\pi x).
$
To examine both mild and stiff regimes,
two interfacial width parameters are considered:
$\epsilon=0.1$ and $\epsilon=0.01$.
\label{eg:ch_1d}
\end{example}

For time integration, we employ the similar IMEX-RK2 scheme used in the
Allen--Cahn example, which can be found in \cite{fu_energydiminishingimplicitexplicit_2024}.

Figure~\ref{fig:ch_1d_contour} shows the spatiotemporal evolution of the
solution using $x$-$t$ contour plots.
For the mild regime ($\epsilon=0.1$), both PINN and ADNN-G capture the
overall phase separation pattern.
Fig.~\ref{fig:ch_1d_quantitative} further indicates that ADNN-G maintains
smaller relative $L^2$ errors throughout the simulation.

The difference becomes more significant in the stiff regime
($\epsilon=0.01$), where the interface width is much smaller and the
dynamics are highly stiff.
In this case, the standard PINN fails to accurately resolve the
evolution due to the severe stiffness induced by the fourth-order
spatial derivatives, leading to large approximation errors and
non-physical energy behavior.
In contrast, the ADNN-G scheme remains stable and accurately reproduces
the phase separation dynamics.

This robustness stems from the weak Galerkin formulation, which
transfers the high-order derivatives onto the basis functions through
integration by parts.
As a result, the regularity requirements imposed on the neural network
representation are significantly relaxed.

The quantitative results in Fig.~\ref{fig:ch_1d_quantitative}
further demonstrate the advantage of the proposed method.
While the PINN solution in the stiff regime exhibits energy stagnation
and noticeable fluctuations, the ADNN-G scheme preserves the correct
monotonic free-energy dissipation and closely follows the reference
solution throughout the evolution.

\begin{figure}[htbp]
    \centering
    \includegraphics[width=\textwidth]{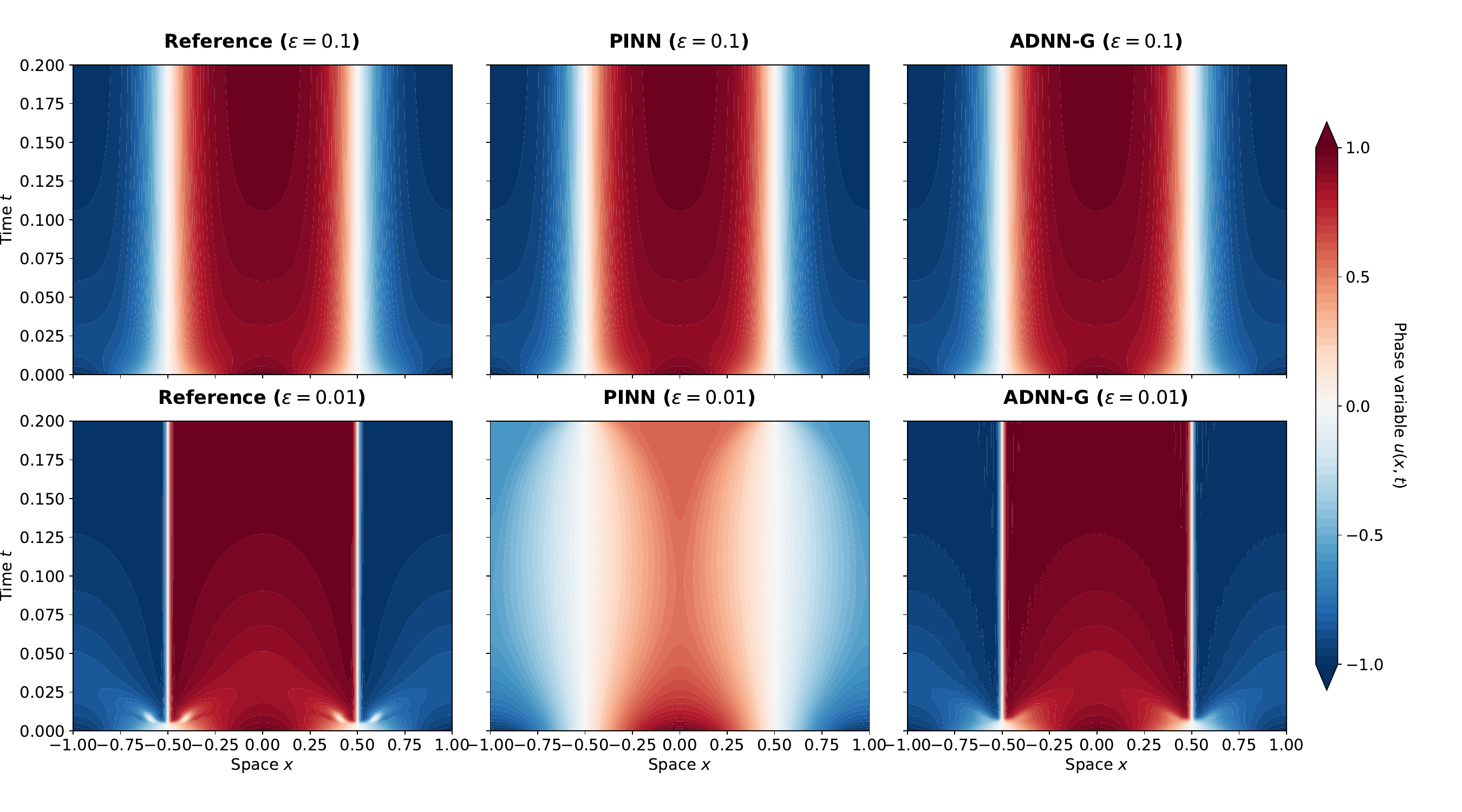}
 \caption{Spatiotemporal evolution of the phase variable $u(x,t)$ 
 for the Cahn--Hilliard dynamics (Example \ref{eg:ch_1d}) 
with $\epsilon=0.1$ (top row) and $\epsilon=0.01$ (bottom row).
From left to right: Reference spectral solution, standard PINN, and the proposed ADNN-G scheme.}
    \label{fig:ch_1d_contour}
\end{figure}
\begin{figure}[htbp]
    \centering
    \includegraphics[width=0.8\textwidth]{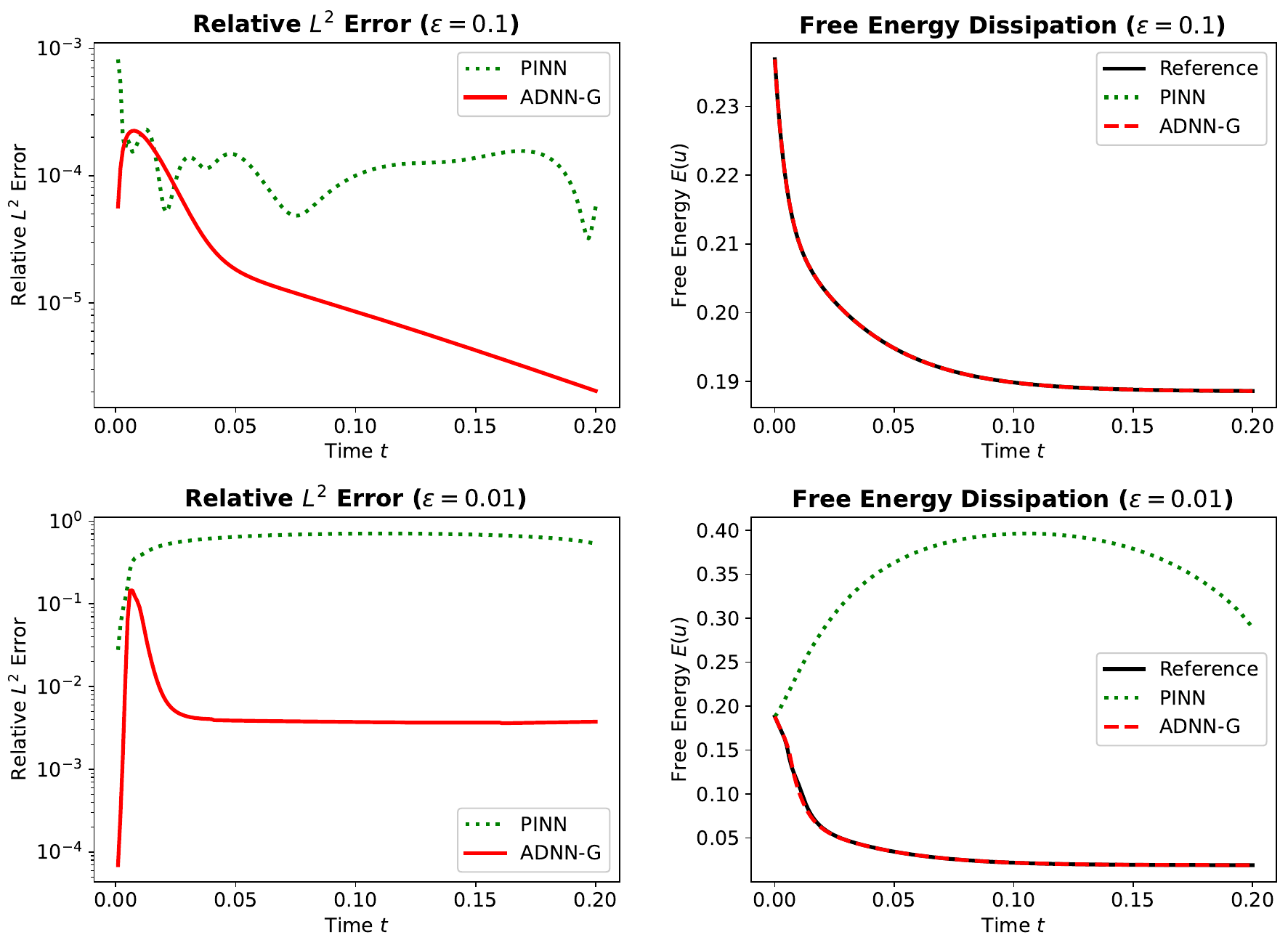}
\caption{Relative $L^2$ error and free-energy dissipation for the Cahn--Hilliard dynamics (Example \ref{eg:ch_1d}) 
with $\epsilon=0.1$ (top row) and $\epsilon=0.01$ (bottom row).}
    \label{fig:ch_1d_quantitative}
\end{figure}

\section{Conclusion}

In this work, we proposed a structure-preserving deep neural network Galerkin framework for nonlinear dissipative partial differential equations. By embedding neural networks into a Galerkin projection formulation, the method preserves the variational structure of gradient flows and guarantees discrete energy dissipation throughout the numerical evolution.

Extensive numerical experiments demonstrate that the proposed approach provides improved accuracy and robustness compared with optimization-based neural PDE solvers, particularly for high-dimensional and multi-scale dynamics. Under comparable degrees of freedom, the DNN-Galerkin method can also outperform classical spectral discretizations, indicating that neural trial spaces offer strong approximation capability for complex evolving structures.

An important observation from this study is that the effectiveness of neural PDE solvers is largely determined by the quality of the induced trial space rather than by the optimization procedure itself. Even when direct neural training fails to converge to a satisfactory solution, properly constructed neural basis functions can still provide efficient approximation spaces when combined with a structure-preserving Galerkin projection.

These findings suggest a different perspective on neural approaches to scientific computing: deep neural networks can serve as adaptive basis generators within structure-aware numerical discretization frameworks. Future work will extend the proposed methodology to more challenging dissipative systems, including stiff gradient flows, higher-dimensional problems, and kinetic equations, further exploring neural representations as flexible approximation spaces for large-scale PDE simulations.

\section*{Acknowledgments}
This work is supported by the National Science Foundation of China (No.12271240, 12426312), the fund of the Guangdong Provincial Key Laboratory of Computational Science and Material Design, China (No.2019B030301001), the Shenzhen Natural Science Fund (RCJC20210609103819018), and the Zhuhai Innovation and Entrepreneurship Team Project (2120004000498).

%\cite{CiCP-28-2139}
%% 参考文献配置 
\bibliographystyle{elsarticle-num} 
\bibliography{ref_up}

\end{document}